
\documentclass[final,leqno,onefignum,onetabnum]{siamltex1213}
\usepackage{caption2}
\usepackage{pdfsync}
\usepackage{amssymb}
\usepackage{amsmath}

\usepackage{amsthm}
\usepackage{graphicx}
\usepackage{empheq}
\everymath{\displaystyle}
\usepackage{color}
\usepackage[usenames,dvipsnames,svgnames,table]{xcolor}

\newcommand{\tr}{^{\! \mathsf{T}}}

\usepackage{hyperref}
\hypersetup{
     colorlinks   = true,
      linkcolor=red,
      urlcolor=blue,
     citecolor    = blue
}

\newtheorem{remark}{Remark}[section]
 \newtheorem{thm}{Theorem}[section]
 \newtheorem{cor}{Corollary}[section]
 \newtheorem{lem}{Lemma}[section]
 \newtheorem{prop}{Proposition}[section]
 \theoremstyle{definition}
 \newtheorem{defn}{Definition}[section]
 \newtheorem{rem}{Remark}[section]
 \numberwithin{equation}{section}
  
\newcommand{\R}{\mathbb{R}}

\title{Asymptotic stability of a Korteweg-de Vries \\equation with a two-dimensional  center manifold}

\author{Shuxia Tang\footnotemark[1]\ 
\and Jixun Chu\footnotemark[2]\ 
\and Peipei Shang\footnotemark[3]
\and Jean-Michel Coron\footnotemark[4]\ 
}

\begin{document}

\maketitle

\renewcommand{\thefootnote}{\fnsymbol{footnote}}

\footnotetext[1]{S. Tang is with Department
of Mechanical \& Aerospace Engineering, University of California, San Diego,
La Jolla, 92093, CA, USA,
and also with
Universit\'{e} Pierre et Marie Curie-Paris 6,
UMR 7598 Laboratoire Jacques-Louis Lions, 75005 Paris, France. (\email{sht015@ucsd.edu}). S. Tang was supported by ERC advanced grant 266907 (CPDENL) of the 7th Research Framework Programme (FP7).}
\footnotetext[2]{J. Chu is with Department of Applied
Mathematics, School of Mathematics and Physics, University of Science and
Technology Beijing,
Beijing 100083, China. (\email{chujixun@126.com}). J. Chu was partially supported by ERC advanced grant 266907 (CPDENL) of the 7th Research
Framework Programme (FP7), National Natural Science
Foundation of China  (No.11401021) and Doctoral Program of Higher Education
(No.20130006120011).}

\footnotetext[3]{P. Shang is with Department of
Mathematics, Tongji
University, Shanghai 200092, China. (\email{peipeishang@hotmail.com}). P. Shang was partially supported by ERC advanced grant 266907(CPDENL), National Natural Science Foundation of China (No.11301387) and Doctoral Program of Higher Education (No. 20130072120008)}
\footnotetext[4]{J.-M. Coron is with Universit\'{e} Pierre et Marie Curie-Paris 6, UMR 7598
Laboratoire Jacques-Louis Lions, 75005 Paris, France. (\email{coron@ann.jussieu.fr}). J.-M. Coron was supported by ERC advanced grant 266907 (CPDENL) of the 7th Research
Framework Programme (FP7).}

\begin{abstract}
Local asymptotic stability analysis is conducted for an initial-boundary-value problem of a  Korteweg-de Vries
equation posed on a finite interval $\left[0, 2\pi \sqrt{7/3}\right]$.
The equation comes with
 a Dirichlet
boundary condition at the left end-point and  both of the Dirichlet and Neumann
homogeneous boundary conditions at the right end-point.  It is known that the
associated linearized  equation around the origin is
not asymptotically stable. In this paper,  the nonlinear
Korteweg-de Vries equation is proved to be
locally asymptotically stable around the origin through the center
manifold method.
 In particular, the existence of a  two-dimensional local  center manifold is presented, which is locally exponentially attractive.
 By analyzing the Korteweg-de Vries equation restricted on the local center manifold,
a polynomial decay rate of the solution is obtained.

\end{abstract}

\begin{keywords}
Korteweg-de Vries equation, nonlinearity,  center manifold, asymptotic stability, polynomial decay rate.
\end{keywords}

\begin{AMS}
35Q53,   37L10, 93D05, 93D20
\end{AMS}

\pagestyle{myheadings}
\thispagestyle{plain}

\section{Introduction}\

The Korteweg-de Vries (KdV) equation
\begin{align}
y_{t}+y_{x}+yy_{x}+y_{xxx}=0\label{KDVcontrol0}
\end{align}
 was
first derived by Boussinesq in \cite[Equation (283 bis)]{B1877} and
 by Korteweg and de Vries in \cite{KV1895}, for describing the propagation
of small amplitude long water waves in a uniform channel. This equation is now commonly used to model unidirectional
propagation of small amplitude long waves in nonlinear dispersive systems.
An excellent reference to help understand both physical motivation and deduction
of the KdV equation is the book by Whitham \cite{W1974}.

 Rosier studied in \cite{R1997} the following nonlinear Neumann boundary control problem for
 the KdV equation with homogeneous Dirichlet boundary conditions,
 posed on a finite spatial interval:
 \begin{equation}
 \left\{
 \begin{array}
 [c]{l}
 y_{t}+y_{x}+yy_{x}+y_{xxx}=0,~t\in(0,\infty),~x\in (0,L),\\
 y(t,0)=y(t,L)=0,~
 y_{x}(t,L)=u(t),~t\in(0,\infty),\\
 y(0,x)=y_{0}(x) , ~x\in(0,L),
 \end{array}
 \right.  \label{KDV
 control}
 \end{equation}
 where $L>0$, the state is $y(t,\cdot):[0,L]\rightarrow\mathbb{R}$, and
  $u(t)\in\mathbb{R}$  denotes the  controller. The equation
 comes with one boundary condition at the left
 end-point and two boundary conditions at the right end-point. He  first
 considered the first
 order power series expansion
 of $(y,u)$ around the origin, which gives the following corresponding linearized
 control
 system
 \begin{equation}
 \left\{
 \begin{array}
 [c]{l}
 y_{t}+y_{x}+y_{xxx}=0,~t\in(0,\infty),~x\in (0,L),\\
 y(t,0)=y(t,L)=0,~y_{x}(t,L)=u(t),~t\in(0,\infty),\\
 y(0,x)=y_{0}(x),~x\in(0,L) .
 \end{array}
 \right.  \label{linearized00}
 \end{equation}
 By means of multiplier technique and the Hilbert Uniqueness Method
 (HUM) \cite{L1988}, he proved that (\ref{linearized00})
 is exactly controllable if and only if the length
 of the spatial domain is not critical,
 i.e.,
 $L\notin\mathcal{N}$, where $\mathcal{N}$ denotes following set of critical
 lengths
\begin{align}
 \mathcal{N}:=\left\{  2\pi\sqrt{\frac{j^{2}+l^{2}+jl}{3}};\;j,l\in
 \mathbb{N}^{\ast}\right\}.
 \label{defN}
\end{align}
 Then, by employing the  Banach fixed point theorem, he derived
 that the nonlinear  KdV control system \eqref{KDV
 control}
 is locally exactly controllable around $0$ provided that $L\notin\mathcal{N}$.
 In the cases with critical lengths
 $L\in\mathcal{N}$,  Rosier demonstrated  in \cite{R1997} that there exists
 a finite dimensional subspace $M$ of $L^2(0, L)$ which
 is unreachable for the linear system  \eqref{linearized00} when starting
 from the origin. In \cite{CC2004}, Coron
 and Cr\'{e}peau  treated a critical case of $L=2k\pi$ (i.e., taking
 $j=l=k$ in $\mathcal{N}$), where $k$ is a positive integer
 such that (see, \cite[Theorem 8.1 and
 Remark 8.2]{2007-Coron-book})
 \begin{equation}\label{condition-sur-pour-espapace-dim1}
 \left(  j^{2}+l^{2}+ jl=3k^{2}\text{ and }j,l\in\mathbb{N}^*\right)
  \Rightarrow\left(  j=l=k\right)  .
 \end{equation}Here, the uncontrollable subspace $M$ for the linear system
 \eqref{linearized00} is one-dimensional. However, through a third order power
 series expansion of the solution, they showed that the nonlinear term $yy_{x}$
 always allows to ``go'' in small-time into the  two directions missed by the linearized control
 system
 \eqref{linearized00}, and then, using a fixed point theorem, they deduced the small-time
 local exact
 controllability around the origin of the nonlinear control system
 \eqref{KDV control}.
 In
 \cite{C2007}, Cerpa studied  the critical case of $L\in \mathcal{N}'$, where
 \begin{align}
 \mathcal{N}':=\bigg\{&2\pi\sqrt{\frac{j^{2}+l^{2}+jl}{3}};\;j,l\in
 \mathbb{N}^{\ast} ~ {\text{satisfying}}~ j>l ~{\text{and}}~\nonumber\\
&~~~~~~~~j^2+jl+l^2\neq m^2+mn+n^2, \forall m,n\in \mathbb{N}^*\backslash \{j\}\bigg\}.
 ~~~~
 \label{N'1}
 \end{align}
 In this case, the uncontrollable subspace $M$  for the  linear system
 \eqref{linearized00} is of dimension 2, and the author used a second order
 expansion of the solution to the  nonlinear control system
 \eqref{KDV control} to prove the local exact
 controllability in large time around the origin of the nonlinear control system
 \eqref{KDV control} (the local controllability in small time for this length $L$ is still an open problem).
 Furthermore,  Cerpa and Cr\'{e}peau considered in \cite{2007-Coron-book} the cases when the dimension
 of $M$ for the
  linear system
 \eqref{linearized00} is higher than 2.
 They implemented a second order expansion of the solution to
 \eqref{KDV control}  for the critical lengths $L\neq2k\pi$ for any $k\in \mathbb{N}^*$,
 and implemented an expansion to the third order if $L=2k\pi$ for some  $k\in
 \mathbb{N}^*$. They showed that the nonlinear term $yy_{x}$
 always allows to ``go'' into all the  directions missed by the linearized control
 system
 \eqref{linearized00}   and then proved the local exact controllability in large time around the origin of the nonlinear control system
 \eqref{KDV control}.

Consider the case when there is no control, i.e., $u=0,$ in \eqref{KDV
 control},
which gives the following
initial-boundary-value KdV
problem 
posed on a finite interval $[0,L]$:
\begin{equation}
\left\{
\begin{array}
[c]{l}
y_{t}+y_x+y_{xxx}+yy_{x}=0,~t\in(0,\infty),~x\in (0,L),\\
y(t,0)=y(t,L)=0,~y_{x}(t,L)=0,~t\in(0,\infty),\\
y(0,x)=y_{0}(x),~x\in(0,L),
\end{array}
\right.  \label{original}
\end{equation}
where the boundary conditions are homogeneous. For  the Lyapunov function
\begin{align}
\label{defE}
E(t)=\frac{1}{2}\Vert y(t,\cdot)\Vert_{L^2(0,L)}^2=\frac{1}{2}\int_0^L  y^2(t,x) dx,
\end{align}
we have
\begin{align}
\dot E (t)&=-\int_0^L y(y_{x}+yy_{x}+y_{xxx})dx=\int_0^L y_x y_{xx}dx=-\frac{1}{2}
y_x^2(t,0)\leq 0.\label{globallstabb}
\end{align}
Thus, $0\in L^2(0,L)$ is stable (see \hyperref[P1]{($\mathcal{P}_1$)} below for the definition of stable) for the KdV equation (\ref{original}).  Moreover, it has been proved in \cite{MVZ2002} that,  if $L\notin\mathcal{N}$, then $0$ is
exponentially stable for the corresponding
linearized equation around the origin
\begin{equation}
\left\{
\begin{array}
[c]{l}
y_{t}+y_{x}+y_{xxx}=0,~t\in(0,\infty),~x\in (0,L),\\
y(t,0)=y(t,L)=0,~y_{x}(t,L)=0,~t\in(0,\infty),~\\
y(0,x)=y_{0}(x),~x\in(0,L),
\end{array}
\right.  \label{linearized0}
\end{equation}
which gives the local  asymptotic stability around the origin for the nonlinear equation \eqref{original}. However, when $L\in \mathcal{N}$, Rosier pointed out in \cite{R1997} that  the  equation \eqref{linearized0}
is not asymptotically stable. Inspired by the fact that the nonlinear
term $yy_{x}$ introduces the local exact controllability around the origin into the KdV control system
 \eqref{KDV
 control} with $L\in\mathcal{N}$, we would like to  discuss whether   the nonlinear term $yy_{x}$ could introduce local asymptotic stability around the origin for
\eqref{original}.

This paper is devoted to investigating the local asymptotic stability  of $0\in L^2(0,L)$ for
\eqref{original}
with the  critical length
\begin{equation}
\label{valueL}
L=2\pi \sqrt{\frac{7}{3}},
\end{equation}
corresponding to $j=1$ and $l=2$ in \eqref{defN}. Let us recall that this local asymptotic stability means that the following two properties are satisfied.
\begin{itemize}
\item[\label{P1}($\mathcal{P}_1$)]  Stability:
for every $\varepsilon>0$, there exists $\eta=\eta(\varepsilon)>0$ such that, if $\Vert
y_{0}\Vert_{L^{2}(0,L)}<\eta$,  then
\begin{align}
\Vert y(t,\cdot)\Vert_{L^{2}(0,L)}<\varepsilon,\quad\forall t\geq0.
\end{align}

\item[\label{P2}($\mathcal{P}_2$)] (Local) attractivity: there exists $\varepsilon_0>0$ such that, if $\Vert y_{0}\Vert
_{L^{2}(0,L)}<\varepsilon_0$,  then
\begin{align}
\lim_{t\rightarrow+\infty}\Vert y(t,\cdot)\Vert_{L^{2}(0,L)}=0.
\end{align}
\end{itemize}
As mentioned above, the stability property \hyperref[P1]{($\mathcal{P}_1$)} is implied by \eqref{globallstabb}.  Our main concern is thus the local attractivity property \hyperref[P2]{($\mathcal{P}_2$)}. We prove the following theorem, where
the precise definition of a solution to \eqref{original} is given in Definition \ref{def_kdv_sol} and  the precise definition of the finite dimensional vector space $M\subset L^2(0,L)$ when $L=2\pi \sqrt{7/3}$ is given in \eqref{spacedec}.
\begin{thm}\label{main result}
Consider the {\text {KdV}} equation
(\ref{original}) with $L=2\pi \sqrt{7/3}$. 
There exist
 $\delta\in (0,+\infty)$, $K>0$,
$\omega>0$
and a map  $g: M \to M^{\bot}$, where  $M^{\bot}\subset L^2(0,L)$ is  the orthogonal of $M$
for the $L^2$-scalar product,  satisfying
\begin{align}
&\label{gC3}
g\in C^3(
M; M^{\bot}),
\\
&g(0)=0,~ g'(0)=0,\label{g}
\end{align}
such that, with
\begin{align}
G:=\left\{ m+g\left( m\right);\; m\in M\right\}\subset L^2(0,L), \label{centerG}
\end{align}
the following three properties hold for every solution $y$ to \eqref{original}
with $\left\Vert y_0\right\Vert
_{L^2(0,L)}<\delta$,
\begin{enumerate}
 \item  (Local exponential attractivity of $G$.)
\begin{gather}
\label{exponentialattraction}
d( y(t,\cdot), G)\leq Ke^{-\omega t }d(y_0,G), ~\forall t>0,
\end{gather}
where $d(\chi,G)$ denotes the distance between $\chi\in L^2(0,L)$ and $G$:
\begin{align}
d(\chi,G):=\inf\{\|\chi-\psi\|_{L^2(0,L)};\;\psi\in G\}.
\end{align}

\item (Local invariance of $G$.)
\begin{equation}\label{eq-local-invariance}
\text{If $y_0\in G$, then
 $y(t,\cdot)\in G$, $\forall t\geq 0$.}
\end{equation}
\item If $y_0$ is in $G$, then
there exists 
$C>0$ such that\begin{align}
\Vert y(t,\cdot)\Vert _{L^2(0,L)}\leq \frac{C\Vert y_0\Vert _{L^2(0,L)}}{\sqrt{1+t\Vert y_0\Vert _{L^2(0,L)}^2}}, \quad \forall t \geq 0.
\label{estimatey(t)infinity}
\end{align}
\end{enumerate}
In particular, $0\in L^2(0,L)$ is locally asymptotically
stable in the sense
of the $L^2(0,L)$-norm for \eqref{original}.
\end{thm}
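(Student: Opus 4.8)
The plan is to recast \eqref{original} as an abstract semilinear evolution equation $\dot y = Ay + B(y)$ on $L^2(0,L)$, where $Ay = -y_x - y_{xxx}$ with domain encoding the boundary conditions $y(0)=y(L)=y_x(L)=0$, and $B(y) = -yy_x = -\tfrac12(y^2)_x$ is the quadratic nonlinearity. The first task is the spectral analysis of $A$ at the critical length $L = 2\pi\sqrt{7/3}$: I would show that $A$ has exactly one complex-conjugate pair of eigenvalues $\pm i\tau_0$ (with $\tau_0\ne 0$) on the imaginary axis, spanning the two-dimensional center space $M$ defined in \eqref{spacedec}, while every other eigenvalue lies in the half-plane $\mathrm{Re}\,\lambda \le -\beta$ for some $\beta>0$. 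The sign information comes from the dissipation identity \eqref{globallstabb}, which forbids eigenvalues with positive real part, combined with a direct computation of the characteristic determinant excluding further imaginary eigenvalues for this specific $L$. This yields the splitting $L^2(0,L) = M \oplus M^{\bot}$ with $A$ exponentially stable on $M^{\bot}$.

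The second task is to construct the center manifold. The essential technical difficulty, and the main obstacle of the whole argument, is that $B(y) = -\tfrac12(y^2)_x$ loses one derivative, so $B$ is not smooth as a map from $L^2$ into $L^2$. I would resolve this by working in the functional framework adapted to the KdV semigroup, exploiting the Kato smoothing effect and the well-posedness theory behind Definition \ref{def_kdv_sol}, in which $B$ becomes a smooth map and the standard infinite-dimensional center manifold theorem applies after a suitable cut-off of the nonlinearity away from the origin. This produces $g \in C^3(M; M^{\bot})$ with $g(0)=0$, $g'(0)=0$, the invariant manifold $G$ of \eqref{centerG}, the local invariance \eqref{eq-local-invariance}, and the local exponential attraction \eqref{exponentialattraction} with rate $\omega < \beta$; this settles parts (1) and (2).

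The third and most delicate analytic task is the decay rate \eqref{estimatey(t)infinity} on $G$. The flow on $G$ reduces to the two-dimensional ODE $\dot m = A|_M m + P_M B(m + g(m))$, where $P_M$ is the spectral projection onto $M$. Introducing a complex coordinate $z$ that diagonalizes $A|_M$, this reads $\dot z = i\tau_0 z + (\text{quadratic}) + (\text{cubic}) + \cdots$. Since the quadratic monomials $z^2, z\bar z, \bar z^2$ are non-resonant for a single frequency $\tau_0$ (the resonance condition $p-q=1$ has no solution with $p+q=2$), a near-identity Poincar\'e normal-form transformation removes them, after which the first resonant term is cubic, $\dot z = i\tau_0 z + c\,|z|^2 z + O(|z|^4)$. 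The amplitude $r=|z|$ then obeys $\dot r = \mathrm{Re}(c)\, r^3 + O(r^4)$, whose solution is $r(t) = r_0/\sqrt{1 - 2\,\mathrm{Re}(c)\, r_0^2\, t}$, precisely the form \eqref{estimatey(t)infinity} provided $\mathrm{Re}(c) < 0$. I would compute $c$ explicitly: its two contributions are the feedback of the quadratic part of $g$ (which solves a Sylvester-type homological equation driven by $P_{M^{\bot}} B(m,m)$, solvable because $2i\tau_0 \notin \sigma(A|_{M^{\bot}})$) into $P_M B(m, g(m))$, together with the secondary cubic terms generated by the normal-form elimination of $P_M B(m,m)$. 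Summing these over the explicit eigenfunctions of $A$ and its adjoint and verifying $\mathrm{Re}(c) < 0$ is the crux; this sign is what the dissipative nonlinearity is expected to supply.

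Finally, the local asymptotic stability of $0\in L^2(0,L)$ follows by combining the three properties: the stability \hyperref[P1]{($\mathcal{P}_1$)} from \eqref{globallstabb}, the exponential attraction of every small solution to $G$ from \eqref{exponentialattraction}, and the polynomial decay to $0$ along $G$ from \eqref{estimatey(t)infinity}, via an asymptotic-phase argument showing that the long-time behaviour of every sufficiently small solution is shadowed by a trajectory on $G$ and hence tends to $0$.
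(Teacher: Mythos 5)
Your plan coincides, almost step for step, with the paper's ``Method 1'': the spectral splitting $L^2(0,L)=M\oplus M^{\bot}$ with a single conjugate pair $\pm iq$ on the imaginary axis and a uniform spectral gap for the rest (Lemma~\ref{Lm3}, Corollary~\ref{Corollary on spectrum}); a $C^3$ local center manifold $g$ obtained from infinite-dimensional center manifold theory (the paper imports this wholesale from \cite{CCS2015}, following \cite{MW2004}, which is where the loss-of-derivative issue you flag is dealt with); reduction to a planar ODE $\dot{\mathbf{m}}=F(\mathbf{m})$; elimination of the non-resonant quadratic terms by a Poincar\'e normal form, leaving $\dot \xi=iq\xi+\rho\,\xi^2\bar\xi+o(|\xi|^3)$; and the decay \eqref{estimatey(t)infinity} read off from $\dot r=\operatorname{Re}(\rho)\,r^3+o(r^3)$. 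Your closing argument for the asymptotic stability of $0$ is also the paper's in substance: exponential attraction to $G$, sequential compactness in the finite-dimensional direction, continuous dependence (Proposition~\ref{prop-difference}) and monotonicity \eqref{energy-decreasing}.

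The genuine gap is the sign $\operatorname{Re}(\rho)<0$, which you defer with ``this sign is what the dissipative nonlinearity is expected to supply.'' That expectation cannot be promoted to an argument. The only structural consequence of dissipation is \eqref{derivative-of-energy}, $\dot E=-\tfrac12 y_x(t,0)^2\le 0$, and on the center manifold this quantity is \emph{quartically} degenerate: since $\varphi_1'(0)=\varphi_2'(0)=0$ by \eqref{eqn_varphi1}--\eqref{eqn_varphi2} and $g=O(|\mathbf{m}|^2)$, the trace $y_x(t,0)$ of a trajectory on $G$ is $O(|\mathbf{m}|^2)$ (this is the paper's $\tilde K$ in \eqref{deftildeK}). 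Hence dissipation only excludes $\operatorname{Re}(\rho)>0$; it is perfectly consistent with $\operatorname{Re}(\rho)=0$, in which case neither \eqref{estimatey(t)infinity} nor even attractivity follows --- and this degenerate scenario is exactly the one to be feared, since the linearized flow conserves the energy on $M$. The paper settles the sign by explicit computation: it writes $g=m_1^2a+m_1m_2b+m_2^2c+o(|\mathbf{m}|^2)$, derives the boundary value problems \eqref{a21}--\eqref{a23} for $a,b,c$ (your homological equations, solvable precisely because $0,\pm 2iq\notin\sigma(\mathcal{A}|_{M^\perp})$), solves them in closed form in the Appendix, assembles the cubic coefficients \eqref{defA1}--\eqref{defD2}, and only then evaluates $\rho_1=-0.014325<0$ (numerically, via Matlab). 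Note that the paper's alternative ``Method 2,'' a Lyapunov functional built on your same dissipation heuristic, does \emph{not} escape this either: it hinges on the non-degeneracy \eqref{not-degenerate} of the quadratic forms $\tilde K$ and $\dot{\tilde K}$, verified through an explicit resultant computation. So your proposal is the right skeleton, but the step you label as ``the crux'' is the entire content of the theorem, and no soft argument can replace it.
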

\begin{remark}\label{rem_nonunique}\rm
 It can be derived from \cite[Theorem 1 and Comments]{DN2014} that, for
every $L>0$,
  there are non-zero
stationary
solutions with the period of $L$ to the following ordinary differential equation
(ODE):
\begin{equation}
\left\{
\begin{array}
[l]{l}
f^{\prime}+ff^{\prime}+f^{\prime\prime
\prime}=0 \text{ in }[0,L],\\
f(0)=f(L)=0,\\
f'(L)=0.
\end{array}
\right.
\end{equation}
That is,   besides the origin, there also exist other steady states of the
nonlinear KdV equation (\ref{original}). Therefore,  $0\in
 L^{2}(0,L)$ is not globally asymptotically stable for  (\ref{original}): Property \hyperref[P1]{($\mathcal{P}_2$)}
 does not hold for arbitrary $\varepsilon_0>0$.
\end{remark}

 Our proof of  Theorem \ref{main result} relies on the center manifold approach. This center manifold is $G$ in Theorem \ref{main result}. Center manifold theory
plays an important role in studying
dynamic properties  of nonlinear systems near ``critical situations''. The  center manifold theorem was first proved for finite dimensional systems by Pliss \cite{P1964} and Kelley \cite{K1967},
and
the readers could refer to
 \cite{K2002, P2001} for more details of this theory.
Analogous results are also established for infinite dimensional systems, such as partial differential equations  (PDEs) \cite{C1981, BJ1989} and
functional differential equations
\cite{HI2010}.
The center manifold method usually leads
to a  dimension reduction of the original problems.
Then, in order to derive
stability properties (asymptotic stable, or, unstable) of the full nonlinear equations, one only needs to analyze
the reduced equation (restricted on the center manifold). When dealing with the infinite dimensional problems, this method can be extremely efficient  if the center manifold is finite dimensional. Following the results on existence, smoothness and attractivity of a center manifold for evolution equations   in  \cite{MW2004}, Chu, Coron and
Shang  studied in \cite{CCS2015}
the local asymptotic stability property of \eqref{original} with the critical
length
$L=2k\pi$ for any positive integer $k$
such that \eqref{condition-sur-pour-espapace-dim1} holds. They proved the existence of a one-dimensional local center manifold. By
analyzing the resulting one-dimensional reduced equation, they obtained the local asymptotic stability
of 0 for \eqref{original}. For $L=2\pi \sqrt{7/3}$, we get, following \cite{CCS2015}, the existence of a two-dimensional local center manifold. It is predictable that the two-dimensional local center manifold introduces more complexity than the one-dimensional local center manifold case.

The organization of this paper is as follows. In Section~\ref{preliminaries},
some basic properties of the linearized KdV equation \eqref{linearized0} and the
KdV equation \eqref{original} are given.
Then, in Section~\ref{sec-2-dim-center-manifold}, we recall a theorem on the existence of a local center manifold for the KdV equation (\ref{original}) and analyze  the dynamics
on the local center manifold.
Theorem \ref{main result} follows from this analysis. In Section~\ref{sec-conclusion}, we present the conclusion and some possible future works. Finally, we end this  article with an appendix that contains
computations which are important for the study of the dynamics on the center manifold.

\section{Preliminaries}\label{preliminaries}

\subsection{Some properties for the linearized equation of
(\ref{original}) around the origin}\label{proplinear}

The origin $y=0$ is an equilibrium of the  initial-boundary-value nonlinear KdV
problem  (\ref{original}). In this subsection, we derive some
properties for the linearized KdV
equation \eqref{linearized0}
around the origin of  (\ref{original}) posed on the finite interval $[0,L]$,
where $L=2\pi \sqrt{7/3}\in\mathcal{N}',$ for which there exists
a unique pair   $\{j=2,l=1\}$ satisfying \eqref{N'1}.

Let $\mathcal{A}: D\left(\mathcal{A}\right)\subset L^2(0,L) \rightarrow L^2(0,L)$
be the linear operator defined by
\begin{align}
\mathcal{A}\varphi:=-\varphi'-\varphi''',
\end{align}
with
\begin{align}
D(\mathcal{A}):=\left\{  \varphi \in H^{3}\left(  0,L\right);\; \varphi \left(  0\right)
=\varphi \left(  L\right)  =\varphi'\left(
 L\right)  =0\right\}  \subset L^2(0,L),
\end{align}
then the linearized equation \eqref{linearized0} can be written  as an evolution
equation in $L^2(0,L)$:
\begin{align}
&\frac{dy(t, \cdot)}{dt}=\mathcal{A}y(t, \cdot).
\end{align}
The following lemma can be immediately obtained.
 \begin{lem}\label{compact}
$\mathcal{A}^{-1}$ exists and is compact on $L^2(0,L)$. Hence,
$\sigma(\mathcal{A})$,
the spectrum of $\mathcal{A}$, consists of isolated eigenvalues only: $\sigma(\mathcal{A})=\sigma_p(\mathcal{A})$,
where $\sigma_p(\mathcal{A})$ denotes the set of eigenvalues of $\mathcal{A}$.
\end{lem}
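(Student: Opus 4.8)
The plan is to establish the three assertions in order: first that $\mathcal{A}\colon D(\mathcal{A})\to L^2(0,L)$ is a bijection (so that $\mathcal{A}^{-1}$ exists as a bounded operator), then that $\mathcal{A}^{-1}$ is compact by factoring it through a compact Sobolev embedding, and finally that the spectral structure follows from the Riesz--Schauder theory of compact operators. The stability/semigroup machinery developed elsewhere is not needed; everything here is self-contained.

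For invertibility I would solve the boundary value problem $\mathcal{A}\varphi=f$, that is $\varphi'''+\varphi'=-f$ with $\varphi(0)=\varphi(L)=\varphi'(L)=0$, directly. The homogeneous equation $\varphi'''+\varphi'=0$ has general solution $\varphi(x)=c_1+c_2\cos x+c_3\sin x$. Imposing the three boundary conditions and eliminating $c_1=-c_2$ reduces the system to a $2\times 2$ linear system in $(c_2,c_3)$ whose coefficient matrix is $\bigl(\begin{smallmatrix}\cos L-1 & \sin L\\ -\sin L & \cos L\end{smallmatrix}\bigr)$, with determinant $\cos^2 L-\cos L+\sin^2 L=1-\cos L$. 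Since $L=2\pi\sqrt{7/3}\notin 2\pi\mathbb{Z}$ we have $\cos L\neq 1$, so this determinant is nonzero and the only homogeneous solution is $\varphi\equiv 0$; hence $\mathcal{A}$ is injective and $0$ is not an eigenvalue. Because the homogeneous problem has only the trivial solution, the Fredholm alternative for regular linear ODE boundary value problems yields, for every $f\in L^2(0,L)$, a unique solution $\varphi$ (obtainable explicitly by variation of parameters); since $\varphi'''=-\varphi'-f\in L^2(0,L)$ one gets $\varphi\in H^3(0,L)\cap D(\mathcal{A})$ together with an estimate $\|\varphi\|_{H^3(0,L)}\le C\|f\|_{L^2(0,L)}$. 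Thus $\mathcal{A}^{-1}$ exists and is bounded from $L^2(0,L)$ into $H^3(0,L)$.

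Compactness is then immediate: $\mathcal{A}^{-1}$ maps $L^2(0,L)$ boundedly into $H^3(0,L)$, and on the bounded interval $(0,L)$ the embedding $H^3(0,L)\hookrightarrow L^2(0,L)$ is compact by the Rellich--Kondrachov theorem, so $\mathcal{A}^{-1}\colon L^2(0,L)\to L^2(0,L)$ is compact. Finally, by the spectral theory of compact operators, $\sigma(\mathcal{A}^{-1})\setminus\{0\}$ consists of isolated eigenvalues of finite multiplicity that can accumulate only at $0$. Since $0\in\rho(\mathcal{A})$, the equivalence $\mu\in\sigma(\mathcal{A})\Leftrightarrow\bigl(\mu\neq 0 \text{ and } \mu^{-1}\in\sigma(\mathcal{A}^{-1})\bigr)$ transfers this to $\mathcal{A}$, whose eigenvectors coincide with those of $\mathcal{A}^{-1}$: the spectrum of $\mathcal{A}$ therefore consists of isolated eigenvalues of finite multiplicity, so $\sigma(\mathcal{A})=\sigma_p(\mathcal{A})$.

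The only step demanding genuine attention is invertibility, and within it precisely the verification that $0$ is not an eigenvalue, since this is where the value of $L$ enters through $\cos L\neq 1$; the regularity of the inverse into $H^3(0,L)$, the compactness via Rellich, and the spectral conclusion are all routine once the resolvent bound is in hand. I therefore expect the determinant computation to be the crux and the remainder to be standard, consistent with the statement being ``immediately obtained.''
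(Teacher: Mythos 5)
Your proof is correct, and its skeleton --- invert $\mathcal{A}$, factor $\mathcal{A}^{-1}$ through a compact Sobolev embedding, then apply Riesz--Schauder theory --- is exactly the paper's. The only genuine difference is how invertibility is established. The paper simply exhibits the inverse in closed form, stating that $\mathcal{A}^{-1}\varphi=\psi$ with
\begin{equation*}
\psi(x) := -\frac{1-\cos(x-L)}{1-\cos L}\int_0^L (1-\cos y)\varphi(y)\,dy+\int_x^L (1-\cos(x-y))\varphi(y)\,dy,
\end{equation*}
a formula one verifies by direct differentiation; the mapping properties (boundedness from $L^2(0,L)$ into $H^1$, indeed into $H^3$ since $\psi'''=-\psi'-\varphi$) and hence compactness are then read off from this kernel representation. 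You instead prove that the homogeneous problem $\varphi'''+\varphi'=0$, $\varphi(0)=\varphi(L)=\varphi'(L)=0$ has only the trivial solution --- via the determinant $1-\cos L\neq 0$ --- and then invoke the Fredholm alternative / variation of parameters to obtain existence and the bound $\|\varphi\|_{H^3(0,L)}\le C\|f\|_{L^2(0,L)}$ abstractly. These are two faces of the same computation: the quantity $1-\cos L$ that you isolate as the crux is precisely the denominator in the paper's explicit kernel, so both proofs use $L\notin 2\pi\mathbb{Z}$ at the same spot. What your route buys is transparency and robustness: no Green's function needs to be guessed or verified, and it is visible exactly where the hypothesis on $L$ enters; what the paper's route buys is an explicit resolvent formula and a one-line verification, at the cost of an unexplained ``by calculation.'' Your spectral conclusion (transferring the Riesz--Schauder structure of $\sigma(\mathcal{A}^{-1})\setminus\{0\}$ to $\sigma(\mathcal{A})$ via $\mu\leftrightarrow\mu^{-1}$, using $0\in\rho(\mathcal{A})$) is the standard compact-resolvent argument the paper leaves implicit, and it is stated correctly.
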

\begin{proof}
By calculation, we get
\begin{equation}
\mathcal{A}^{-1}\varphi=\psi,~ \forall\varphi\in L^2(0,L),\\
\end{equation}
with
\begin{equation}
\psi : =-\frac{1-\cos(x-L)}{1-\cos L}\int_0^L (1-\cos y)\varphi(y)dy+\int_x^L
(1-\cos(x-y))\varphi(y)dy.
\end{equation}
Hence we get the existence of ${\mathcal{A}}^{-1}$ and that, by the Sobolev embedding
theorem, this operator is compact on $ L^2(0,L).$ Therefore, $\sigma(\mathcal{A})$, the spectrum of $\mathcal{A}$, consists of isolated eigenvalues only.
\end{proof}
The following proposition is proved.
\begin{prop}{\rm{(\cite[Proposition 3.1]{R1997})}}.
\label{LE1}
$\mathcal{A}$ generates a $C_{0}$-semigroup of contractions
$\left\{  S\left(
 t\right)  \right\}  _{t\geq0}$ on
$L^2(0,L)$, that  is, for any given
initial
data $y_{0}\in L^2(0,L)$, $S(t)y_{0}$ is the mild
solution of the linearized equation \eqref{linearized0}, and
\begin{align}
\left\Vert S(t)y_{0}\right\Vert _{L^2(0,L)}\leq\left\Vert y_{0}\right\Vert
_{L^2(0,L)  },\quad \forall t\geq0.
\end{align}
Moreover, for every $\lambda\in\sigma\left(\mathcal{A}\right)
 $, $\operatorname{Re}
\left(  \lambda\right)  \leq 0$.
\end{prop}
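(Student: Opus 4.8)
The plan is to apply the Lumer--Phillips theorem. The first step is to verify that $\mathcal{A}$ is dissipative. For $\varphi\in D(\mathcal{A})$, integrating by parts and using the boundary conditions $\varphi(0)=\varphi(L)=\varphi'(L)=0$ built into $D(\mathcal{A})$, one finds
\[
\int_0^L (\mathcal{A}\varphi)\,\varphi\,dx
 = -\int_0^L(\varphi'+\varphi''')\varphi\,dx
 = -\frac{1}{2}\,\varphi'(0)^2 \le 0 ,
\]
so that $\langle \mathcal{A}\varphi,\varphi\rangle_{L^2}\le 0$. Since $D(\mathcal{A})$ contains $C_c^\infty(0,L)$, it is dense in $L^2(0,L)$; and $\mathcal{A}$ is closed because $\mathcal{A}^{-1}$ is a bounded operator by Lemma~\ref{compact}.

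To conclude that $\mathcal{A}$ generates a $C_0$-semigroup of contractions it remains to verify, equivalently to a range condition, that the adjoint is also dissipative. I would compute $\mathcal{A}^*$ explicitly: pairing $\mathcal{A}\varphi$ against $\psi$, integrating the third-order term by parts three times, and collecting the boundary contributions shows that $\mathcal{A}^*\psi=\psi'+\psi'''$ on the domain $D(\mathcal{A}^*)=\{\psi\in H^3(0,L):\ \psi(0)=\psi(L)=\psi'(0)=0\}$, where the shifted boundary condition $\psi'(0)=0$ (in place of $\varphi'(L)=0$) arises precisely so that every boundary term cancels for all $\varphi\in D(\mathcal{A})$. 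The same integration by parts as above then gives
\[
\int_0^L (\mathcal{A}^*\psi)\,\psi\,dx = -\frac{1}{2}\,\psi'(L)^2 \le 0 ,
\]
so $\mathcal{A}^*$ is dissipative as well. A densely defined closed operator for which both the operator and its adjoint are dissipative generates a $C_0$-semigroup of contractions; this produces $\{S(t)\}_{t\ge 0}$ together with the contraction estimate $\|S(t)y_0\|_{L^2}\le\|y_0\|_{L^2}$ for all $t\ge 0$, and $S(t)y_0$ is by construction the mild solution of \eqref{linearized0}.

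For the spectral statement I would invoke Lemma~\ref{compact}, which guarantees $\sigma(\mathcal{A})=\sigma_p(\mathcal{A})$. If $\lambda\in\sigma_p(\mathcal{A})$ has eigenfunction $\varphi\neq 0$, then dissipativity yields $\operatorname{Re}(\lambda)\,\|\varphi\|_{L^2}^2=\operatorname{Re}\langle\mathcal{A}\varphi,\varphi\rangle_{L^2}\le 0$, hence $\operatorname{Re}(\lambda)\le 0$; equivalently, the Hille--Yosida bound places the entire half-plane $\operatorname{Re}\lambda>0$ in the resolvent set.

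The only delicate point is the bookkeeping of the boundary terms in the third-order integration by parts, in particular identifying the correct adjoint boundary conditions so that the right cancellations occur; everything else is routine. An alternative to the adjoint computation would be to solve $(\lambda I-\mathcal{A})\varphi=f$ directly for some $\lambda>0$ and check surjectivity, but the explicit formula for $\mathcal{A}^{-1}$ already recorded in Lemma~\ref{compact} makes the closedness and the spectral localization especially clean.
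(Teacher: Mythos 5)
Your proof is correct and follows essentially the same route as the source the paper cites for this proposition: the paper gives no proof of its own but refers to Rosier \cite[Proposition 3.1]{R1997}, whose argument is precisely this Lumer--Phillips scheme, checking dissipativity of $\mathcal{A}$ and of its adjoint $\mathcal{A}^*\psi=\psi'+\psi'''$ with domain $\{\psi\in H^3(0,L):\ \psi(0)=\psi(L)=\psi'(0)=0\}$, and then reading off $\operatorname{Re}\lambda\le 0$ from the contraction property. Your boundary-term bookkeeping and the identification of the adjoint boundary conditions are exactly right.
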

If
 $\operatorname{Re}
\left(  \lambda\right)  < 0$, $\forall \lambda\in\sigma\left(\mathcal{A}\right)$,
then it follows directly from the  ABLP (Arendt-Batty-Lyubich-Phong) Theorem \cite{N1993} that the semigroup $S(t)$ is asymptotically stable
on $L^2(0,L)$.
Since we only have $\operatorname{Re}
\left(  \lambda\right)  \leq 0$, $\forall \lambda\in\sigma\left(\mathcal{A}\right)$, the main concern needs to be put on  the
 eigenvalues on the imaginary axis and their corresponding eigenfunctions.
Following the proofs for \cite[Lemma 2.6]{CCS2015} and  \cite[Lemma 3.5]{R1997}, the following lemma is proved.
\begin{lem}
\label{Lm3} There exists a unique pair of conjugate eigenvalues of
$\mathcal{A}$
on the imaginary axis, that is,
\begin{align}
\sigma_{p}\left(  \mathcal{A}\right)  \cap i\mathbb{R}=\left\{\lambda=\pm iq;\;q= \frac{20}{21\sqrt{21}}\right\}.\label{q_value}
\end{align}Moreover, the corresponding eigenfunctions of $\mathcal{A}$ with respect to
$\lambda=\pm iq$ are
\begin{align}
\varphi:=C\left(\varphi_1\mp i\varphi_2\right),\label{phi1phi2}
\end{align}
respectively, where $C$ is an arbitrary constant, and
$\varphi_1,~\varphi_2$ are two nonzero real-valued functions:
\begin{align}
\varphi_1(x)= \Theta\left(\cos\left(\frac{5}{\sqrt{21}}x\right)
-3\cos\left(\frac{1}{\sqrt{21}}x\right)
+2\cos\left(\frac{4}{\sqrt{21}}x\right)\right),\label{phi1sol}\\
\varphi_2(x)=\Theta\left(-\sin\left(\frac{5}{\sqrt{21}}x\right)
-3\sin\left(\frac{1}{\sqrt{21}}x\right)+2\sin\left(\frac{4}{\sqrt{21}}x\right)\right),\label{phi2sol}
\end{align}
with
\begin{align}
\Theta:=\frac{1}{\sqrt{14\pi}} \sqrt[4]{\frac{3}{7}}.  \label{C}
\end{align}

\end{lem}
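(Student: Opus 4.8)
The plan is to reduce the eigenvalue problem $\mathcal{A}\varphi=\lambda\varphi$ on the imaginary axis to an overdetermined ordinary differential boundary value problem, and then to convert the solvability condition into the Diophantine identity that singles out $L=2\pi\sqrt{7/3}$. First I would write the eigenvalue equation as
\begin{align}
\varphi'''+\varphi'+\lambda\varphi=0 \text{ on } [0,L],\quad \varphi(0)=\varphi(L)=\varphi'(L)=0.
\end{align}
By Proposition \ref{LE1}, any $\lambda\in\sigma_p(\mathcal{A})\cap i\mathbb{R}$ has $\operatorname{Re}\lambda=0$, so write $\lambda=iq$ with $q\in\mathbb{R}$. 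The key first step is the energy identity: pairing $\mathcal{A}\varphi$ with $\varphi$ in $L^2(0,L)$ and integrating by parts using the three boundary conditions yields $2\operatorname{Re}\langle\mathcal{A}\varphi,\varphi\rangle=-|\varphi'(0)|^2$, exactly as in \eqref{globallstabb}. Since $\langle\mathcal{A}\varphi,\varphi\rangle=iq\|\varphi\|_{L^2}^2$ is purely imaginary, this forces the extra boundary condition $\varphi'(0)=0$. Thus a purely imaginary eigenvalue must come with an eigenfunction satisfying the four conditions $\varphi(0)=\varphi'(0)=\varphi(L)=\varphi'(L)=0$.

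Next I would analyze the constant-coefficient equation through its characteristic polynomial $r^3+r+iq=0$. Substituting $r=is$ turns this into the real cubic $s^3-s-q=0$, so the three characteristic roots are $r_k=is_k$, where $s_1,s_2,s_3$ are the roots of $s^3-s-q=0$; by Vieta, $s_1+s_2+s_3=0$, $s_1s_2+s_2s_3+s_3s_1=-1$ and $s_1s_2s_3=q$. I would first argue that all three $s_k$ must be real: if two were a complex-conjugate pair, the associated factors $e^{is_kL}$ would have distinct moduli, which (as the solvability condition below shows) is incompatible with a nontrivial eigenfunction; equivalently this restricts attention to $q^2\le 4/27$, where the cubic has three distinct real roots. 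Writing the general solution $\varphi(x)=\sum_{k=1}^3 c_k e^{is_kx}$, the conditions $\varphi(0)=\varphi'(0)=0$ read $\sum_k c_k=0$ and $\sum_k s_kc_k=0$, whose solution space is one-dimensional and spanned by the cyclic differences $c_k=s_{k+1}-s_{k+2}$.

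The crux is then the remaining pair of conditions $\varphi(L)=\varphi'(L)=0$. Substituting these $c_k$, the two linear equations in the quantities $E_k:=e^{is_kL}$ have a coefficient matrix whose kernel I would compute to be spanned by $(1,1,1)$ (the cross product of the two rows equals $-(s_1-s_2)(s_2-s_3)(s_3-s_1)\,(1,1,1)$, which is nonzero since the $s_k$ are distinct). Hence the eigenfunction exists if and only if $E_1=E_2=E_3$, i.e. $(s_j-s_k)L\in 2\pi\mathbb{Z}$ for all $j,k$. Setting $(s_1-s_2)L=2\pi m$ and $(s_2-s_3)L=2\pi n$ with $m,n\in\mathbb{Z}$, and using $\sum_k s_k=0$ together with $\sum_k s_k^2=2$, a short computation gives $m^2+mn+n^2=3L^2/(4\pi^2)$, which for $L=2\pi\sqrt{7/3}$ becomes $m^2+mn+n^2=7$. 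Solving this integer equation pins down the frequencies as $\{s_1,s_2,s_3\}=\{5,-1,-4\}/\sqrt{21}$ (all integer representations lie in a single orbit of the quadratic form and yield the same frequency set), whence $q=|s_1s_2s_3|=20/(21\sqrt{21})$ and the conjugate pair $\pm iq$ is unique.

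Finally I would read off the eigenfunctions: with the ordering above, $c_k=s_{k+1}-s_{k+2}$ is proportional to $(1,-3,2)$, so $\varphi(x)$ is proportional to $e^{i5x/\sqrt{21}}-3e^{-ix/\sqrt{21}}+2e^{-i4x/\sqrt{21}}$, whose real and imaginary parts are exactly $\varphi_1$ and $-\varphi_2$ as in \eqref{phi1sol}--\eqref{phi2sol}; the normalizing constant $\Theta$ in \eqref{C} is then fixed by a direct integration that normalizes $\varphi_1$ and $\varphi_2$ over $[0,L]$. I expect the main obstacle to be the passage from the solvability condition $E_1=E_2=E_3$ to the Diophantine identity $m^2+mn+n^2=7$ together with the uniqueness claim, since this is precisely where the value $L=2\pi\sqrt{7/3}$ enters and where one must simultaneously exclude the complex-root case and rule out any second pair of imaginary eigenvalues.
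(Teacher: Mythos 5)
Your argument reaches the correct conclusion and is essentially sound, but it takes a genuinely different route from the paper. The paper gives no self-contained proof of Lemma \ref{Lm3}: after the energy identity (the same one you use, the complex analogue of \eqref{globallstabb}) forces the fourth boundary condition $\varphi'(0)=0$, it invokes the proofs of \cite[Lemma 2.6]{CCS2015} and \cite[Lemma 3.5]{R1997}. In Rosier's lemma the solvability of the overdetermined problem $\varphi'''+\varphi'+\lambda\varphi=0$, $\varphi(0)=\varphi'(0)=\varphi(L)=\varphi'(L)=0$ is characterized by a complex-analytic argument: one extends $\varphi$ by zero, observes that its Fourier transform is entire (Paley--Wiener), writes it as a ratio whose denominator is the characteristic polynomial, and concludes that the three characteristic roots must be congruent modulo $(2\pi/L)\mathbb{Z}$ --- which is exactly the condition $E_1=E_2=E_3$ you derive, and which leads to the set $\mathcal{N}$ in \eqref{defN}. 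You replace that complex-analytic step by an elementary one: solve the two conditions at $x=0$ explicitly, then show by the cross-product computation that the two conditions at $x=L$, viewed as linear equations in $(E_1,E_2,E_3)$, have kernel spanned by $(1,1,1)$. From there both routes coincide: Vieta plus $\sum_k s_k^2=2$ gives $m^2+mn+n^2=3L^2/4\pi^2=7$, the frequency set $\{5,-1,-4\}/\sqrt{21}$, $q=20/(21\sqrt{21})$ as in \eqref{q_value}, and the eigenfunctions \eqref{phi1sol}--\eqref{phi2sol}. Your version is more elementary and self-contained; the paper's is shorter given the cited lemma, and the Fourier argument handles all root configurations uniformly without case distinctions.

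Two points in your write-up should be shored up. First, the degenerate case of a repeated root of $s^3-s-q$ is not covered by your exponential ansatz: three distinct real roots require the strict inequality $q^2<4/27$ (not $q^2\le 4/27$ as you wrote), and at $q^2=4/27$ the fundamental system contains $x e^{isx}$, so a short separate check is needed to exclude an eigenvalue there; note that your moduli argument for excluding nonreal roots is fine, since a real cubic with a nonreal conjugate pair automatically has three distinct roots. Second, the claim that ``all integer representations lie in a single orbit'' deserves a line of justification: one lists the finitely many solutions of $m^2+mn+n^2=7$ (namely $(\pm 1,\pm 2)$, $(\pm 2,\pm 1)$, $(\pm 3,\mp 1)$, $(\mp 1,\pm 3)$, $(\pm 3,\mp 2)$, $(\mp 2,\pm 3)$) and checks that they all produce the frequency set $\{5,-1,-4\}/\sqrt{21}$ up to permutation and a global sign, the sign flip exchanging $q$ and $-q$; this is precisely what makes the conjugate pair unique. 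With these two additions your proof is complete.
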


\begin{rem}
The equations satisfied by  $\varphi_1$ and
$\varphi_2$
are
\begin{equation}
\left\{
\begin{array}
[c]{c}
\varphi_1'+\varphi_1'''=-q\varphi_2,\\
\varphi_1(0)=\varphi_1(L)=0,\\
\varphi_1'(0)=\varphi_1'(L)=0,\label{eqn_varphi1}
\end{array}
\right.
\end{equation}
and
\begin{equation}
\left\{
\begin{array}
[c]{c}
\varphi_2'+\varphi_2'''=q\varphi_1,\\
\varphi_2(0)=\varphi_2(L)=0,\\
\varphi_2'(0)=\varphi_2'(L)=0.\label{eqn_varphi2}
\end{array}
\right.
\end{equation}
\end{rem}

\begin{rem}
We have
\begin{align}
\int_0^L \varphi_1 (x)\varphi_2(x)dx =0,
\label{innerproduct_phi}
\end{align}
and, with the definition of $\Theta$ given in \eqref{C},
\begin{equation}
\label{phi-i-norm-1}
\Vert \varphi_1\Vert _{L^2(0,L)}=\Vert \varphi_2\Vert _{L^2(0,L)}=1.
\end{equation}
\end{rem}

From the results in Lemma \ref{compact}, Proposition \ref{LE1} and Lemma \ref{Lm3},
we obtain
the following corollary.
\begin{cor}
\label{Corollary on spectrum}$\lambda=\pm i\frac{20}{21\sqrt{21}}$ is the unique
eigenvalue pair of $\mathcal{A}$
on the imaginary axis, and all the other eigenvalues of $\mathcal{A}$ have
negative
real parts which
are uniformly bounded away from the imaginary axis, i.e., there exists $r>0$
such that  any of the nonzero eigenvalues of $\mathcal{A}$ has a real part
which is less
than $-r$.
\end{cor}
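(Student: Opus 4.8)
The plan is to assemble Corollary~\ref{Corollary on spectrum} from the three results already in hand, and the only genuine work is to upgrade the two qualitative facts---``$\operatorname{Re}(\lambda)\le 0$'' and ``the spectrum is discrete''---into the quantitative uniform gap $\operatorname{Re}(\lambda)<-r$. First I would record what is immediate: Lemma~\ref{Lm3} identifies $\sigma_p(\mathcal{A})\cap i\mathbb{R}$ as exactly the conjugate pair $\pm iq$ with $q=\tfrac{20}{21\sqrt{21}}$, so this pair is the unique eigenvalue collection on the imaginary axis; Proposition~\ref{LE1} gives $\operatorname{Re}(\lambda)\le 0$ for every $\lambda\in\sigma(\mathcal{A})$; and Lemma~\ref{compact} gives $\sigma(\mathcal{A})=\sigma_p(\mathcal{A})$ consisting of isolated eigenvalues only, via compactness of $\mathcal{A}^{-1}$. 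Combining these, every eigenvalue other than $\pm iq$ satisfies $\operatorname{Re}(\lambda)<0$ strictly (equality would force it onto the imaginary axis, contradicting the uniqueness in Lemma~\ref{Lm3}).

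The remaining point---and the one I expect to be the main obstacle---is passing from ``each nonzero eigenvalue has strictly negative real part'' to ``there is a single $r>0$ bounding \emph{all} of them away from the axis at once.'' This is not automatic: an infinite discrete set of eigenvalues, each with negative real part, could in principle accumulate on the imaginary axis. I would rule this out using the compactness of $\mathcal{A}^{-1}$ from Lemma~\ref{compact}. The eigenvalues $\mu$ of the compact operator $\mathcal{A}^{-1}$ are related to those of $\mathcal{A}$ by $\mu=1/\lambda$, and the only possible accumulation point of $\sigma(\mathcal{A}^{-1})$ is $0$; equivalently, the eigenvalues $\lambda$ of $\mathcal{A}$ can accumulate only at $\infty$. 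Hence $|\lambda|\to\infty$ along any infinite subsequence of eigenvalues, and in any bounded region of $\mathbb{C}$ there are only finitely many eigenvalues of $\mathcal{A}$.

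With this accumulation structure in place, the uniform gap follows by a standard compactness-of-finiteness argument. Fix any bounded vertical strip, say $\{\lambda:\operatorname{Re}(\lambda)\ge -1,\ |\lambda|\le R\}$ for suitable $R$; it contains only finitely many eigenvalues, each (apart from $\pm iq$) having strictly negative real part, so their real parts admit a strictly negative maximum. Eigenvalues outside this bounded region either have $|\lambda|$ large---and I would argue, as in the referenced proofs of \cite[Lemma~2.6]{CCS2015} and \cite[Lemma~3.5]{R1997} together with the asymptotic distribution of roots of the characteristic equation for the third-order operator, that for large $|\lambda|$ the real part is in fact very negative---or already satisfy $\operatorname{Re}(\lambda)\le -1$. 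Taking $r$ to be a positive number strictly less than the absolute value of the largest real part among the finitely many ``near'' eigenvalues (excluding $\pm iq$) and consistent with the large-$|\lambda|$ decay then yields a single $r>0$ with $\operatorname{Re}(\lambda)<-r$ for every nonzero eigenvalue $\lambda\neq \pm iq$. The subtle part is justifying the decay of $\operatorname{Re}(\lambda)$ as $|\lambda|\to\infty$; this rests on the spectral analysis of the characteristic polynomial already developed in establishing Lemma~\ref{Lm3}, so I would invoke it rather than redo it. The conclusion is exactly the statement of the corollary, with the identification of $\pm i\tfrac{20}{21\sqrt{21}}$ as the unique pair on the axis inherited verbatim from Lemma~\ref{Lm3}.
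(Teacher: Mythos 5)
Your assembly of the easy parts is exactly right, and it is in fact more careful than the paper itself, which offers no argument at all for this corollary: it simply declares that it follows from Lemma~\ref{compact}, Proposition~\ref{LE1} and Lemma~\ref{Lm3}. Like you, I read the intended proof as: $\sigma(\mathcal{A})=\sigma_p(\mathcal{A})$ is discrete with accumulation only at infinity (compactness of $\mathcal{A}^{-1}$), every eigenvalue has $\operatorname{Re}\lambda\le 0$, only $\pm iq$ lie on the axis, and hence in any bounded region the eigenvalues other than $\pm iq$ have real parts bounded above by a strictly negative constant. You correctly identify that this leaves the genuine issue untouched: a discrete set of eigenvalues, each with $\operatorname{Re}\lambda<0$, could still approach the imaginary axis as $|\lambda|\to\infty$, and no soft argument excludes this. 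Indeed, a diagonal operator on $\ell^2$ with eigenvalues $-1/n+in$ generates a contraction semigroup, has compact resolvent, and has no eigenvalue on the axis, yet admits no uniform gap; so the corollary cannot follow from the three cited results alone, and the high-frequency behavior of this particular operator must enter somewhere.

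The gap in your proposal is where you discharge precisely that step. You propose to ``invoke rather than redo'' the spectral analysis behind Lemma~\ref{Lm3}, i.e.\ \cite[Lemma 3.5]{R1997} and \cite[Lemma 2.6]{CCS2015}. But those arguments analyze only \emph{purely imaginary} $\lambda$: one sets $\lambda=iq$ with $q$ real and determines for which $q$ and $L$ the boundary value problem has a nontrivial solution. They contain no information about eigenvalues \emph{near} the axis with large modulus, so they cannot yield your claim that $\operatorname{Re}\lambda$ is very negative for large $|\lambda|$. What is actually needed is an asymptotic analysis of the characteristic determinant: for $\lambda$ in a strip $|\operatorname{Re}\lambda|\le r_0$ with $|\lambda|$ large, the roots of $r^3+r+\lambda=0$ consist of one root with large positive real part, one with large negative real part, and one nearly purely imaginary root; imposing $\varphi(0)=\varphi(L)=\varphi'(L)=0$ on $\varphi=\sum_j c_j e^{r_j x}$, the resulting $3\times 3$ determinant is dominated by a single exponentially large term and so cannot vanish. (Equivalently, the large eigenvalues of $\mathcal{A}$ lie in a sector about the negative real axis.) This computation is short but it is not contained in the lemmas you cite, nor anywhere in the paper; without it, your final sentence asserting the uniform bound $\operatorname{Re}\lambda<-r$ is not justified. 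Everything else in your proposal --- the reduction to finitely many eigenvalues in bounded sets and the strict negativity off the axis --- stands.
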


Let us define
\begin{align}
M:={\text{span}}\{\varphi_1,\varphi_2\}=\{m_1 \varphi_1+m_2 \varphi_2;\; \mathbf{m}=(m_1,m_2) \in \R^2
\}\subset L^2(0,L),\label{spacedec}
\end{align}
where $\varphi_1,~\varphi_2$ are defined in \eqref{phi1sol}, \eqref{phi2sol} and \eqref{C}.
Then the
following decomposition holds:
\begin{align}
L^2(0,L)=M\oplus M^{\perp}, \label{dec_H}
\end{align}
with
\begin{align}
M^{\perp}:=\left\{\varphi\in L^2(0,L);\; \int_0^L \varphi(x)\varphi_1(x)dx=0,
\int_0^L \varphi(x)\varphi_2(x)dx=0\right\}.
\label{defMorthogonal}
\end{align}
\subsection{Some properties of the KdV equation (\ref{original})}\label{existenceCM}\

By considering the equation (\ref{original})  as a special
case (with $f=0$ and $u=0$) of the equation (4.6)--(4.8) in \cite{2007-Coron-book}, we give the following definition for a solution  to the equation (\ref{original}), which follows from \cite[Definition 4.1]{2007-Coron-book}.
\begin{defn}\label{def_kdv_sol}
Let $T > 0,~y_0\in L^2(0,L)$. A solution to the Cauchy problem
 (\ref{original}) on $[0,T]$ is a function 
 \begin{align}
 y \in \mathcal{B}:= C^0([0, T];L^2(0,L))\cap L^2(0,T;H^1(0,L))
 \end{align} 
 such that, for every
 $\tau \in [0,T]$ and for every $\phi \in C^3([0,\tau]\times [0,L])$ such that
\begin{align}
\phi(t, 0) =\phi(t,L)=\phi_x(t, 0) = 0,~\forall t\in[0,\tau],
\label{condition-bord-phi}
\end{align}
one has
\begin{align}\label{sol-def}
-\int_0^\tau\int_0^L (&\phi_t+\phi_x+\phi_{xxx})ydxdt+\int_0^\tau\int_0^L \phi yy_xdxdt\nonumber\\
&+\int_0^L y(\tau,x)\phi(\tau,x)dx-\int_0^L y_0( x)\phi(0,x)dx=0.
\end{align}
A solution to the Cauchy problem
 (\ref{original}) on $[0,+\infty)$ is a function 
 \begin{align}
 y \in C^0([0, +\infty);L^2(0,L))\cap L^2_{\text{loc}}([0,+\infty);H^1(0,L))
 \end{align} such that, for every $T>0$, $y$ restricted to $[0,T]\times (0,L)$ is a solution to (\ref{original}) on $[0,T]$.
\end{defn}
Then by considering
  equation (\ref{original})  as a special
case (with $f=0$ and $u=0$) of the equation $(A.1)$ in  \cite{CC2004}, the following two
propositions about  the existence and uniqueness
of the solutions to \eqref{original} follow directly from \cite[Proposition 14 and Proposition 15]{CC2004}.
\begin{prop}
\label{prop-existence-solution}
 Let $T\in (0,+\infty)$. There exist $\varepsilon=\varepsilon(T)>0$
and $C=C(T)>0$ such that, for every $y_0\in L^2(0,L)$ with $\Vert y_0\Vert _{L^2(0,L)}<\varepsilon(T)$,
there exists at least one solution $y$ to the equation  (\ref{original}) on $[0,T]$ which satisfies
\begin{align}
\|y\|_\mathcal{B}&:=\max_{t\in [0,T]} \|y(t,\cdot)\|_{L^2(0,L)}+\left(\int_0^T
\|y(t,\cdot)\|_{H^1(0,L)}^2dt\right)^{1/2}\nonumber\\
&\leq C(T)\|y_0\|_{L^2(0,L)}.
\end{align}
\end{prop}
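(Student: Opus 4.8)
The plan is to construct the solution by a Banach fixed-point argument applied to the Duhamel (mild) formulation
\begin{align}
y(t)=S(t)y_0-\int_0^t S(t-s)\big(y(s)y_x(s)\big)\,ds,\label{eq-duhamel-sketch}
\end{align}
inside the ball $B_R:=\{y\in\mathcal{B};\ \|y\|_\mathcal{B}\le R\}$, with $R$ proportional to $\|y_0\|_{L^2(0,L)}$. The first task is to upgrade the $L^2$-contraction estimate of Proposition~\ref{LE1} into control of the full $\mathcal{B}$-norm. The key linear ingredient is the Kato smoothing (hidden regularity) estimate
\begin{align}
\|S(\cdot)y_0\|_{L^2(0,T;H^1(0,L))}\le C(T)\|y_0\|_{L^2(0,L)},\label{eq-kato-sketch}
\end{align}
which I would obtain by multiplying $y_t+y_x+y_{xxx}=0$ by $xy$ and integrating over $(0,L)\times(0,T)$: the boundary conditions $y(t,0)=y(t,L)=y_x(t,L)=0$ make all boundary contributions vanish (or carry a favorable sign), leaving the identity $\tfrac32\int_0^T\!\!\int_0^L y_x^2\,dx\,dt=\tfrac12\int_0^T\!\!\int_0^L y^2\,dx\,dt-\tfrac12\big[\int_0^L x y^2\,dx\big]_0^T$, whence \eqref{eq-kato-sketch} follows together with the contraction bound and Poincar\'e's inequality (using $y(t,0)=0$). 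This computation is formal for $L^2$ data, and I would justify it by first taking $y_0\in D(\mathcal{A})$, where $S(t)y_0$ is a classical solution, and then passing to the limit by density.

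Second, I would record the inhomogeneous analogue: the Duhamel operator $f\mapsto\int_0^\cdot S(\cdot-s)f(s)\,ds$ maps $L^1(0,T;L^2(0,L))$ boundedly into $\mathcal{B}$, with norm bounded by $C(T)$. The $C^0([0,T];L^2)$ part is immediate from the contraction property, and the $L^2(0,T;H^1)$ part follows from the same multiplier argument applied to the inhomogeneous equation. With this in hand, the nonlinearity is controlled by the bilinear estimate
\begin{align}
\|y y_x\|_{L^1(0,T;L^2(0,L))}\le C\!\int_0^T\|y(s)\|_{H^1(0,L)}^2\,ds\le C\|y\|_\mathcal{B}^2,\label{eq-bilinear-sketch}
\end{align}
where the first inequality uses the one-dimensional embedding $H^1(0,L)\hookrightarrow L^\infty(0,L)$ in the form $\|y y_x\|_{L^2}\le\|y\|_{L^\infty}\|y_x\|_{L^2}\le C\|y\|_{H^1}^2$, together with the analogous Lipschitz estimate for the difference $yy_x-zz_x=\tfrac12\big((y-z)(y+z)\big)_x$. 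Combining \eqref{eq-kato-sketch}, the inhomogeneous bound and \eqref{eq-bilinear-sketch}, the right-hand side of \eqref{eq-duhamel-sketch} defines a map $\Phi$ satisfying $\|\Phi(y)\|_\mathcal{B}\le C(T)\|y_0\|_{L^2}+C(T)\|y\|_\mathcal{B}^2$ and $\|\Phi(y)-\Phi(z)\|_\mathcal{B}\le C(T)(\|y\|_\mathcal{B}+\|z\|_\mathcal{B})\|y-z\|_\mathcal{B}$.

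Finally, I would close the argument by choosing $R:=2C(T)\|y_0\|_{L^2}$ and then $\varepsilon(T)$ small enough that $2C(T)^2\varepsilon(T)\le\tfrac12$; for $\|y_0\|_{L^2}<\varepsilon(T)$ this makes $\Phi$ map $B_R$ into itself and act as a strict contraction there, so the Banach fixed-point theorem yields a fixed point $y\in B_R$. This $y$ is the desired solution, and the inclusion $\|y\|_\mathcal{B}\le R=2C(T)\|y_0\|_{L^2}$ gives precisely the claimed estimate. One remaining verification is that the mild solution \eqref{eq-duhamel-sketch} is a solution in the weak sense of Definition~\ref{def_kdv_sol}, which I would check by pairing \eqref{eq-duhamel-sketch} against a test function $\phi$ satisfying \eqref{condition-bord-phi} and integrating by parts. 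I expect the main obstacle to be the rigorous proof of the smoothing estimates, both \eqref{eq-kato-sketch} and its inhomogeneous counterpart, since the multiplier identities must be justified by approximation and the boundary terms tracked carefully; this is exactly where the homogeneous boundary conditions enter. These estimates are the content of \cite[Proposition 14 and Proposition 15]{CC2004}, from which the statement follows directly.
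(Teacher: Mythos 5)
Your proposal is correct and is essentially the paper's approach: the paper's entire proof of Proposition~\ref{prop-existence-solution} consists of observing that \eqref{original} is the special case ($f=0$, $u=0$) of equation (A.1) in \cite{CC2004}, so that the statement follows directly from \cite[Proposition 14]{CC2004} --- exactly the citation you invoke in your last sentence. Your sketch (Duhamel formulation, Kato smoothing via the $xy$ multiplier with all boundary terms vanishing under the homogeneous conditions, the bilinear estimate from $H^1(0,L)\hookrightarrow L^\infty(0,L)$, and a Banach fixed point in a ball of radius proportional to $\Vert y_0\Vert_{L^2(0,L)}$) is a faithful reconstruction of the argument underlying that reference, with no genuine gaps.
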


\begin{prop}
\label{prop-difference}
 Let $T \in (0,+\infty)$. There exists $C>0$ such that,  for every solutions $y_1$ and $y_2,$ corresponding to every initial
conditions $(y_{10}, y_{20})\in
(L^2(0,L))^2$ respectively, to the equation   (\ref{original})
 on $[0,T]$, one has the following inequalities:
\begin{align}
&\int_0^T \int_0^L (y_{1x}(t,x)-y_{2x}(t,x))^2 dxdt\leq \int_0^L (y_{10}(x)-y_{20}(x))^2 dx\nonumber\\
&~~~~~~~~~~~~~~~~~~~~~\times
\exp\left(C\left(1+\|y_1\|^2_{L^2(0,T;H^1(0,L))}+\|y_2\|^2_{L^2(0,T;H^1(0,L))}\right)\right),\\
&\int_0^L (y_1(t,x)-y_2(t,x))^2 dx\leq \int_0^L (y_{10}(x)-y_{20}(x))^2
dx\nonumber\\
&~~~~~~~~~~~~~~~~~~~~\times \exp\left(C\left(1+\|y_1\|^2_{L^2(0,T;H^1(0,L))}+\|y_2\|^2_{L^2(0,T;H^1(0,L))}\right)\right),
\end{align}
for all $t\in [0,T]$.
\end{prop}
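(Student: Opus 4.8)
The plan is to work with the difference $w:=y_1-y_2$ and run two energy estimates in parallel, a standard $L^2$ balance and a Kato-type smoothing balance. Subtracting the two copies of \eqref{original} and using the algebraic identity $y_1y_{1x}-y_2y_{2x}=y_1w_x+wy_{2x}$, one sees that $w$ solves the equation $w_t+w_x+w_{xxx}+y_1w_x+wy_{2x}=0$, which is linear in $w$ with coefficients $y_1,y_2\in L^2(0,T;H^1(0,L))$, together with the homogeneous boundary conditions $w(t,0)=w(t,L)=0$ and $w_x(t,L)=0$ and with $w(0,\cdot)=y_{10}-y_{20}$. All the computations below are first carried out for smooth solutions and then extended to the class $\mathcal{B}$ by a density argument; this is the only point where the low regularity of the solutions must be handled with care, and since $w$ satisfies a linear equation the regularization is standard and parallels \cite{CC2004}.

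First I would multiply the equation for $w$ by $w$ and integrate over $(0,L)$. Using the boundary conditions, the transport and dispersive terms collapse to the boundary dissipation $\tfrac12 w_x^2(t,0)$, and the two nonlinear contributions combine, after one integration by parts, to give the identity
\[ \frac12\frac{d}{dt}\|w\|_{L^2(0,L)}^2+\frac12 w_x^2(t,0)=\int_0^L\Big(\tfrac12 y_{1x}-y_{2x}\Big)w^2\,dx. \]
Next, to produce the time-integrated $H^1$-seminorm required by the first inequality, I would multiply the same equation by $xw$ and integrate. The crucial computation is $\int_0^L xww_{xxx}\,dx=\tfrac32\|w_x\|_{L^2(0,L)}^2$, all boundary terms vanishing thanks to $w(t,0)=w(t,L)=w_x(t,L)=0$. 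This yields a second identity
\[ \frac32\|w_x\|_{L^2(0,L)}^2=-\frac12\frac{d}{dt}\int_0^L xw^2\,dx+\frac12\|w\|_{L^2(0,L)}^2+\frac12\int_0^L(y_1+xy_{1x})w^2\,dx-\int_0^L xy_{2x}w^2\,dx. \]

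The nonlinear terms in both identities are all of the form $\int_0^L a\,w^2\,dx$ with $a\in\{y_1,y_{1x},y_{2x}\}$ (up to bounded factors). For $a=y_{ix}$ I would use the one-dimensional Agmon inequality $\|w\|_{L^\infty(0,L)}^2\le 2\|w\|_{L^2(0,L)}\|w_x\|_{L^2(0,L)}$ (valid since $w$ vanishes at the endpoints) followed by Young's inequality to get
\[ \int_0^L|y_{ix}|w^2\,dx\le \varepsilon\|w_x\|_{L^2(0,L)}^2+C_\varepsilon\big(1+\|y_{ix}\|_{L^2(0,L)}^2\big)\|w\|_{L^2(0,L)}^2, \]
whereas for $a=y_1$ the cruder bound $\int_0^L|y_1|w^2\,dx\le C(1+\|y_1\|_{H^1(0,L)}^2)\|w\|_{L^2(0,L)}^2$ suffices. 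Writing $m(t):=\|w(t,\cdot)\|_{L^2(0,L)}^2$, I would choose $\varepsilon$ small enough that the $\varepsilon\|w_x\|^2$ terms are absorbed into the $\tfrac32\|w_x\|^2$ on the left of the smoothing identity, obtaining a pointwise control of $\|w_x\|^2$; substituting this back into the $L^2$ balance produces a single differential inequality $\Psi'(t)\le 2E(t)\Psi(t)$ for the combined quantity $\Psi(t):=\tfrac12 m(t)+\tfrac34\varepsilon\int_0^L xw^2\,dx$, where $E(t)\le C\big(1+\|y_1(t,\cdot)\|_{H^1(0,L)}^2+\|y_2(t,\cdot)\|_{H^1(0,L)}^2\big)$. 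Since $\tfrac12 m(t)\le\Psi(t)\le C\,m(t)$, Gronwall's lemma gives the second inequality, the integral $\int_0^T E$ producing exactly the exponent $C(1+\|y_1\|_{L^2(0,T;H^1)}^2+\|y_2\|_{L^2(0,T;H^1)}^2)$, the fixed factor $e^{CT}$ and the algebraic prefactors being absorbed into the constant and into the "$1$" inside the exponent. Finally, integrating the smoothing identity over $(0,T)$, discarding the favorable term $-\tfrac12\int_0^L xw^2(T,\cdot)\,dx\le0$, bounding $\tfrac12\int_0^L xw_0^2\,dx\le\tfrac L2\,m(0)$, and inserting the pointwise bound on $m$ just obtained yields the first inequality.

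The main obstacle I anticipate is the coupling between the two estimates: both the pointwise $L^2$ bound and the time-integrated $H^1$ bound hinge on controlling $\int_0^L y_{ix}w^2\,dx$, and after the Agmon–Young step this term unavoidably carries a factor $\|w_x\|_{L^2}$ that neither estimate can dominate in isolation. Resolving this forces the two balances to be proved simultaneously through the single functional $\Psi$, with $\varepsilon$ tuned so that the $\|w_x\|^2$ generated in the $L^2$ balance is controlled by the smoothing gain $\tfrac32\|w_x\|^2$. The only remaining care is the density argument that legitimizes all the integrations by parts for solutions merely in $\mathcal{B}$.
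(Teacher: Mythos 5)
Your argument is correct: the two multiplier identities (in particular $\int_0^L x\,w\,w_{xxx}\,dx=\tfrac32\Vert w_x\Vert_{L^2(0,L)}^2$ under the boundary conditions $w(t,0)=w(t,L)=w_x(t,L)=0$ satisfied by $w=y_1-y_2$), the Agmon--Young absorption of the $\int y_{ix}w^2$ terms, and the combined functional $\Psi$ with Gronwall all check out and produce exactly the stated exponent $C\bigl(1+\Vert y_1\Vert^2_{L^2(0,T;H^1(0,L))}+\Vert y_2\Vert^2_{L^2(0,T;H^1(0,L))}\bigr)$. Note that the paper itself gives no proof of this proposition --- it simply invokes \cite[Propositions 14 and 15]{CC2004} --- and what you have written is essentially a reconstruction of the standard energy/Kato-smoothing argument behind that citation, including the regularization-and-density step for solutions merely in $\mathcal{B}$, which you correctly identify as the only delicate point and which is likewise the one handled in the cited reference.
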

Let us also mention that for every solution $y$ to \eqref{original} on $[0,T]$ or on $[0,+\infty),$
\begin{equation}\label{energy-decreasing}
t\mapsto \|y(t,\cdot)\|_{L^2(0,L)}^2 \text{ is a non-increasing function.}
\end{equation}
This can be easily seen by multiplying the first equation of \eqref{original} with $y$, integrating on $[0,L]$ and performing integration by parts. One then gets, if $y$ is smooth enough,
\begin{equation}\label{derivative-of-energy}
\frac{d}{dt}\int_0^Ly(t,x)^2dx=-y_x(t,0)^2,
\end{equation}
which gives \eqref{energy-decreasing}. The general case follows from a smoothing argument. As a consequence of Proposition~\ref{prop-existence-solution},  Proposition~\ref{prop-difference} and  \eqref{energy-decreasing}, one sees that \eqref{original} has one and only one solution defined on $[0,+\infty)$ if $\|y_0\|_{L^2(0,L)}<\varepsilon(1)$.

\section{Existence of a center manifold and dynamics on this manifold}
\label{sec-2-dim-center-manifold}\

Let us start this section by recalling why, as it is classical, the property ``$0\in L^2(0,L)$ is locally asymptotically
stable in the sense
of the $L^2(0,L)$-norm for \eqref{original}'' stated at the end of Theorem~\ref{main result} is a consequence of the other statements
in this theorem. For convenience, let us recall the argument. Let $y_0\in L^2(0,L)$ be such that $\left\Vert y_0\right\Vert
_{L^2(0,L)}<\delta$ and let $y$ be the solution to \eqref{original}. It suffices to check that
\begin{equation}\label{ytendvers0}
  y(t,\cdot)\rightarrow 0 \text{ in } L^2(0,L) \text{ as } t\rightarrow +\infty.
\end{equation}
By \eqref{exponentialattraction}, \eqref{energy-decreasing} and the fact that $M$ is of finite dimension, there exists an increasing sequence of positive real numbers $(t_n)_{n\in \mathbb{N}}$ and
$z_0\in L^2(0,L)$ such that
\begin{gather}
\label{tntendinfty}
t_n\rightarrow +\infty \text{ as } n\rightarrow +\infty,
\\
\label{cvz0}
y(t_n,\cdot)\rightarrow z_0 \text{ in } L^2(0,L) \text{ as } n\rightarrow +\infty,
\\
\label{z0petitetdansG}
z_0\in G \text{ and } \left\Vert z_0\right\Vert
_{L^2(0,L)}<\delta.
\end{gather}
Let $z:[0,+\infty)\times (0,L)\to \R$ be the solution to \eqref{original} satisfying the initial condition $z(0,\cdot)=z_0$.
It follows from \eqref{estimatey(t)infinity} and \eqref{z0petitetdansG} that
\begin{gather}\label{cvdezvers0}
z(t,\cdot)\rightarrow 0 \text{ in } L^2(0,L) \text{ as } t\rightarrow +\infty.
\end{gather}
Let $\eta>0$. By \eqref{cvdezvers0}, there exists $\tau>0$ such that
\begin{equation}\label{zeta/2}
\left\Vert z(\tau,\cdot)\right\Vert_{L^2(0,L)}\leq \frac{\eta}{2}.
\end{equation}
By Proposition~\ref{prop-difference} and \eqref{cvz0},
\begin{equation}\label{cvny}
y(t_n+\tau,\cdot)\rightarrow z(\tau,\cdot) \text{ in } L^2(0,L) \text{ as } n\rightarrow +\infty.
\end{equation}
By \eqref{zeta/2} and \eqref{cvny}, there exists $n_0\in \mathbb{N}$ such that
\begin{equation}\label{cvnyeta}
\left\Vert y(t_{n_0}+\tau,\cdot)\right\Vert_{L^2(0,L)}< \eta,
\end{equation}
which, together with \eqref{energy-decreasing}, implies that
\begin{equation}\label{y(t)petit}
\left\Vert y(t,\cdot)\right\Vert_{L^2(0,L)}< \eta,\quad \forall t\geq t_{n_0}+\tau,
\end{equation}
which  concludes the proof of \eqref{ytendvers0}.

The remaining parts of this section are organized as follows. We first recall in Section~\ref{existence_CM} a theorem (Theorem~\ref{center-manifold_0}) on the existence of a local center manifold for \eqref{original}. Then in Section~\ref{subsecdynamics-center-manifold} we analyze the dynamics of \eqref{original} on this center manifold
and deduce Theorem~\ref{main result} from this analysis.

\subsection{Existence of a local center manifold}\label{existence_CM}\

In \cite[Theorem 3.1]{CCS2015}, following \cite{MW2004}, the existence of a center manifold for \eqref{original} was proved for the first critical length, i.e., $L=2\pi$. The same proof applies for our $L$ (i.e., the $L$ defined by \eqref{valueL}) and allows us to get the following theorem.

\begin{thm}\label{center-manifold_0}
There exist $\delta\in (0,\varepsilon(1))$, $K>0$, $\omega>0$
and a map  $g: M \to M^{\bot}$  satisfying \eqref{gC3} and \eqref{g}
such that, with $G$ defined by \eqref{centerG},
the following two   properties  hold for every solution $y(t,x)$ to \eqref{original}
with
$\left\Vert y_0\right\Vert
_{L^2(0,L)}<\delta$,
\begin{enumerate}
 \item  (Local exponential attractivity of $G$.)
\begin{gather}
\label{exponentialattraction-new}
d( y(t,\cdot), G)\leq Ke^{-\omega t }d(y_0,G), ~\forall t>0,
\end{gather}
where $d(\chi,G)$ denotes the distance between $\chi\in L^2(0,L)$ and $G$:
\begin{align}
d(\chi,G):=\inf\{\|\chi-\psi\|_{L^2(0,L)};\;\psi\in G\}.
\end{align}

\item (Local invariance of $G$.)
\begin{equation}\label{eq-local-invariance-new}
\text{If $y_0\in G$, then
 $y(t,\cdot)\in G$, $\forall t\geq 0$.}
\end{equation}
\end{enumerate}
\end{thm}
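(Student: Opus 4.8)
The plan is to realize \eqref{original} as an abstract semilinear evolution equation $\dot y = \mathcal{A}y + N(y)$ on $X = L^2(0,L)$, with $N(y) := -yy_x = -\tfrac12(y^2)_x$, and then to invoke the abstract center manifold theorem of \cite{MW2004} exactly as was done for the first critical length $L=2\pi$ in \cite[Theorem 3.1]{CCS2015}. The central observation is that the only structural input that changes between $L=2\pi$ and $L=2\pi\sqrt{7/3}$ is the dimension of the central subspace: the spectral picture supplied by Corollary~\ref{Corollary on spectrum} is qualitatively identical, namely a finite set of eigenvalues on the imaginary axis (here the conjugate pair $\pm iq$ of Lemma~\ref{Lm3}, spanning the two-dimensional space $M$ of \eqref{spacedec}) together with a remainder of the spectrum whose real parts lie below $-r<0$. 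The decomposition \eqref{dec_H}, $X = M\oplus M^\perp$, is precisely the spectral splitting required by the theorem, with $M$ the center part and $M^\perp$ the exponentially stable part.

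First I would check the linear hypotheses. By Proposition~\ref{LE1}, $\mathcal{A}$ generates a $C_0$-semigroup of contractions, and Corollary~\ref{Corollary on spectrum} furnishes the uniform spectral gap on which the graph transform of \cite{MW2004} rests: on $M^\perp$ the restricted semigroup decays like $e^{-rt}$, while on the two-dimensional $M$ it is a pure rotation, the central eigenvalues $\pm iq$ having zero real part. Since the central growth rate is therefore $0$, the spectral-gap condition guaranteeing $C^k$-smoothness of the manifold (stable decay rate strictly larger than $k$ times the central growth rate) holds for every finite $k$; in particular one obtains $g\in C^3$ as claimed in \eqref{gC3}. Next I would verify the nonlinear hypotheses. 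The map $N$ is a homogeneous quadratic polynomial, hence smooth with $N(0)=0$ and $DN(0)=0$; the vanishing of $DN(0)$ is exactly what forces the tangency $g(0)=0$, $g'(0)=0$ of \eqref{g}, so that $G$ defined by \eqref{centerG} is tangent to $M$ at the origin. To make the graph transform a global contraction I would replace $N$ by a cut-off $N_\rho$ agreeing with $N$ on a ball of radius $\rho$ and globally Lipschitz with constant $O(\rho)$; taking $\rho$ small lets the gap dominate that Lipschitz constant, producing a global center manifold for the truncated equation which coincides with a local center manifold $G$ for \eqref{original} on a $\delta$-ball, $\delta$ depending on $\rho$ and on $\varepsilon(1)$ from Proposition~\ref{prop-existence-solution}.

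The hard part, and the only place where the KdV structure must be used with care, is that $N(y)=-yy_x$ loses one spatial derivative and so is not a bounded map $L^2\to L^2$; the abstract theorem cannot be applied to $N$ read naively as a Nemytskii operator on $X$. This is handled exactly as in \cite{MW2004,CCS2015} by working with the mild (variation-of-constants) formulation in the smoothing space $\mathcal{B}=C^0([0,T];L^2(0,L))\cap L^2(0,T;H^1(0,L))$ of Definition~\ref{def_kdv_sol}: the Kato-type smoothing encoded in Propositions~\ref{prop-existence-solution} and~\ref{prop-difference} shows that the solution map is well defined and Lipschitz on small $L^2$-balls, so the derivative-losing term, although unbounded pointwise in time, yields a well-behaved contribution once integrated against the semigroup. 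In short, the regularity lost by $yy_x$ is recovered by the gain $L^2(0,L)\mapsto L^2(0,T;H^1(0,L))$ of the flow, and this is what legitimizes the abstract construction.

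Finally, the two conclusions are read off from the construction. Local invariance \eqref{eq-local-invariance-new} is automatic from the graph-transform fixed point, and local exponential attractivity \eqref{exponentialattraction-new} follows from the gap estimate on $M^\perp$: trajectories starting in the $\delta$-ball approach $G$ at rate $e^{-\omega t}$ for any $\omega\in(0,r)$, with $K$ and $\omega$ inherited from the dichotomy constants. Since no step of this argument is sensitive to $\dim M$ being $1$ rather than $2$, the proof of \cite[Theorem 3.1]{CCS2015} transfers verbatim, which is exactly why the theorem may be asserted with the same proof.
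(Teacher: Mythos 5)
Your proposal is correct and takes essentially the same route as the paper: the paper's entire proof is the observation that the argument of \cite[Theorem 3.1]{CCS2015}, which follows \cite{MW2004}, applies verbatim to $L=2\pi\sqrt{7/3}$ since the spectral picture (Proposition~\ref{LE1}, Corollary~\ref{Corollary on spectrum}) differs only in the center subspace $M$ being two-dimensional. Your expanded verification of the hypotheses (spectral gap, quadratic nonlinearity with cut-off, recovery of the derivative loss in $yy_x$ via the smoothing encoded in Propositions~\ref{prop-existence-solution} and~\ref{prop-difference}) is precisely what that citation implicitly delegates to \cite{CCS2015} and \cite{MW2004}.
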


\subsection{Dynamics on the local center manifold}
\label{subsecdynamics-center-manifold}\

In this section we study the dynamics of \eqref{original} on $G_{\delta}$ with
\begin{gather}
\label{defGdelta}
G_{\delta}:=\{\zeta(x)\in G;\; \left\Vert\zeta\right\Vert_{L^2(0,L)}< \delta\}.
\end{gather}
Let
\begin{gather}
\label{defOmega}
\Omega:=\{(m_1,m_2) \in \R^2;\, m_1\varphi_1+m_2\varphi_2 + g(m_1\varphi_1+m_2\varphi_2)\in G_{\delta}\} ,
\end{gather}
then $\Omega$ is a bounded open subset of $\R^2$ which contains $(0,0)\in \R^2$. Let $\mathbf{m}^0=(m_1^0,m_2^0)\in \Omega$, and let $y$ be the solution of \eqref{original} on $[0,+\infty)$ for the initial data $y_0:=m_1^0\varphi_1+m_2^0\varphi_2 + g(m_1^0\varphi_1+m_2^0\varphi_2)$.
It follows from \eqref{energy-decreasing} and Theorem~\ref{center-manifold_0} that $y(t,\cdot)\in G_{\delta}$ for every $t\in[0,+\infty)$. Hence we can define, for $t\in [0,+\infty)$, $\mathbf{m}(t)=(m_1(t),m_2(t))\in \Omega$ by requiring that
\begin{equation}\label{defm(t)}
y(t,\cdot)=m_1(t)\varphi_1+m_2(t)\varphi_2 + g(m_1(t)\varphi_1+m_2(t)\varphi_2).
\end{equation}
Since $y\in C^0([0,+\infty); L^2(0,L))$, then $\mathbf{m}\in C^0([0,+\infty);\R^2)$.
Let $T>0$. Let $u\in C^\infty_0(0,T)$. We apply \eqref{sol-def} with $\tau=T$ and $\phi(t,x):=u(t)\varphi_1(x)$ (note that, by \eqref{eqn_varphi1}, \eqref{condition-bord-phi} holds). We get
\begin{align}
-\int_{0}^{T} \int_0^L (\dot u(t) \varphi_1(x) + u(t)\varphi'_1(x) + u(t)\varphi'''_1(x))y(t,x) dxdt
\nonumber\\
+\int_{0}^{T} \int_0^L u(t)\varphi_1(x)(yy_x)(t,x)  dxdt=0.
\label{dotm-1-step1}
\end{align}
 From \eqref{eqn_varphi1}, \eqref{defMorthogonal}, \eqref{defm(t)} and \eqref{dotm-1-step1}, we have
\begin{equation}
-\int_{0}^{T}  (m_1(t)\dot u(t)  -q m_2(t)u(t)) dt -\frac{1}{2}\int_{0}^{T} \int_0^L y^2(t,x)\varphi'_1(x) u(t) dxdt=0.
\label{dotm-1-step2}
\end{equation}
Hence, in the sense of distributions on $(0,T)$,
\begin{equation}
 \dot m_1= -q m_2 + \frac{1}{2} \int_0^L \left(m_1\varphi_1+m_2\varphi_2+g\left(m_1\varphi_1+m_2\varphi_2\right)\right)^2\varphi'_1dx.
\label{dotm-1-step3}
\end{equation}
Similarly, in the sense of distributions on $(0,T)$,
\begin{equation}
 \dot m_2= q m_1 +\frac{1}{2}\int_0^L \left(m_1\varphi_1+m_2\varphi_2+g\left(m_1\varphi_1+m_2\varphi_2\right)\right)^2\varphi'_2dx.
\label{dotm-1-step4}
\end{equation}
Hence, if we define $F:\Omega\to \R^2$, $\mathbf{m}=(m_1, m_2) \mapsto
F(\mathbf{m})$, by
\begin{gather}\label{defF}
F(\mathbf{m}):=
\begin{pmatrix}
\displaystyle -q m_2 +\frac{1}{2} \int_0^L
(m_1\varphi_1+m_2\varphi_2 + g(m_1\varphi_1+m_2\varphi_2))^2\varphi'_1dx
\\
\displaystyle q m_1 +\frac{1}{2} \int_0^L
(m_1\varphi_1+m_2\varphi_2 + g(m_1\varphi_1+m_2\varphi_2))^2\varphi'_2dx
\end{pmatrix}
,
\end{gather}
then
\begin{equation}\label{dotmathbfm}
\dot{\mathbf{m}}=F(\mathbf{m}).
\end{equation}
Note that, by \eqref{gC3} and \eqref{defF},
\begin{equation}\label{FC3}
  F\in C^3(\Omega;\R^2),
\end{equation}
which, together with \eqref{dotmathbfm}, implies that
\begin{equation}\label{mregular}
  \mathbf{m}\in C^4([0,+\infty);\R^2).
\end{equation}

We now estimate $g$ close to $0\in M$. Let $\psi \in C^3([0,L])$ be such that
\begin{equation}
\psi (0) = \psi (L) = \psi'(0) = 0.
\label{condition-bord-psi}
\end{equation}
  Using Definition~\ref{def_kdv_sol} with $\phi(t,x):= \psi(x)$, \eqref{condition-bord-psi} and  integration by parts, we get
\begin{align}
-\frac{1}{\tau}\int_{0}^{\tau} \int_0^L (\psi'+ \psi''')y dxdt -\frac{1}{2\tau }
\int_{0}^{\tau} \int_0^L \psi'y^2  dxdt  \nonumber\\
 + \int_0^L \frac{1}{\tau}\left(y(\tau,x)-y_0(x)\right) \psi(x) dx  = 0.
 \label{eq-psi-int}
\end{align}
Letting $\tau\rightarrow 0^+$ in \eqref{eq-psi-int}, and using \eqref{defF}, \eqref{dotmathbfm} and \eqref{mregular}, we get
\begin{align}
-\int_0^L (\psi'+ \psi''')y_0 dx -\frac{1}{2}
\int_0^L \psi' y^2_0  dx + \int_0^L \Big(\dot m_1(0)\varphi_1(x) + \dot m_2(0)\varphi_2(x)  \nonumber\\
+\frac{\partial g}{\partial m_1}( \mathbf{m}^0) \dot m_1(0) + \frac{\partial g}{\partial m_2}( \mathbf{m}^0)\dot m_2(0)\Big)\psi dx  = 0.
 \label{eq-psi-int-tau=0}
\end{align}
We expand $g$ in a neighborhood of $0\in M$. Using \eqref{gC3} and \eqref{g}, there exist 
\begin{align}
a\in M^\perp, b\in M^\perp, c\in M^\perp\label{abcperp}
\end{align} 
such that
\begin{align}
&g(\alpha\varphi_1+\beta\varphi_2) = 
\alpha^2 a+\alpha \beta b+ \beta^2c+o(\alpha^2+\beta^2 ) \text { in } L^2(0,L) \text{ as } \alpha^2+\beta^2\rightarrow 0, \label{estimation_ystar_order2}\\
&\frac{\partial g}{\partial m_1}(\alpha\varphi_1+\beta\varphi_2)= 
2\alpha a + \beta b +o(|\alpha|+|\beta|) \text { in } L^2(0,L) \text{ as } |\alpha|+|\beta| \rightarrow 0, \label{estimation_ystar-1_order2}\\
&\frac{\partial g}{\partial m_2}(\alpha\varphi_1+\beta\varphi_2)= 
\alpha b + 2\beta c +o(|\alpha|+|\beta|) \text { in } L^2(0,L) \text{ as } |\alpha|+|\beta| \rightarrow 0. \label{estimation_ystar-2_order2}
\end{align}
As usual, by \eqref{estimation_ystar_order2}, we mean that, for every $\varsigma_1>0$, there exists $\varsigma_2>0$ such that
\begin{align}\label{defo}
&\left(\alpha^2+\beta^2\leq \varsigma_1\right)
\nonumber\\
&\Rightarrow\left(\Vert g(\alpha\varphi_1+\beta\varphi_2) - \left(\alpha^2 a+\alpha \beta b+ \beta^2c\right)\Vert_{L^2(0,L)}\leq \varsigma_2(\alpha^2+\beta^2 )\right).
\end{align}
Similar definitions are used in \eqref{estimation_ystar-1_order2}, \eqref{estimation_ystar-2_order2} and later on.
We now expand the left hand side of \eqref{eq-psi-int-tau=0} in terms of $m_1^0$, $m_2^0$, $(m_1^0)^2$, $m_1^0m_2^0$ and
$(m_2^0)^2$ as $|m_1^0|+|m_2^0|\rightarrow 0$.

 For the functions $\varphi_1$ and $\varphi_2$ defined by \eqref{phi1sol}, \eqref{phi2sol} and  \eqref{C}, the following equalities can be derived from  \eqref{eqn_varphi1},
\eqref{eqn_varphi2}
and using integrations by parts:
\begin{align}
&\int_{0}^{L}\varphi_1(x)\varphi_2'(x)dx=\frac{10}{7\sqrt{21}},
~~~~~~\int_{0}^{L}\varphi_2(x)\varphi_1'(x)dx    =-\frac{10}{7\sqrt{21}},
\label{12prime}\\
&\int_{0}^{L}\varphi_1^{2}(x)\varphi_1'(x)dx    =0,~~
~~~~~~~~~~\int_{0}^{L}\varphi_2^{2}(x)\varphi_2'(x)dx    =0,\label{phi^2phix}\\
&\int_{0}^{L}\varphi_{1}^2(x)\varphi_{2}'(x)dx     =-2c_1,~~~~~~\int_{0}^{L}\varphi_{2}^2(x)\varphi_{1}'(x)dx
=2\sqrt{3}c_1,\\
&\int_{0}^{L}\varphi_{1}(x)\varphi_{2}(x)\varphi_{1}'(x)dx     =c_1,~~~\int_{0}^{L}\varphi_{1}(x)\varphi_{2}(x)\varphi_{2}'(x)dx
  =-\sqrt{3}c_1,
\label{phiy*y*x1}
\end{align}
where the constant $c_1$ is defined by
\begin{align}
c_1:=\frac{177147}{392392\pi}\sqrt{\frac{1}{2\pi}}\sqrt[4]{\frac{3}{7}}.\label{c1_value}
\end{align}

Looking successively at the terms in $(m_1^0)^2$, $m_1^0m_2^0$ and
$(m_2^0)^2$ in \eqref{eq-psi-int-tau=0} as $|m_1^0|+|m_2^0|\rightarrow 0$, we get, using  \eqref{defF}, \eqref{dotmathbfm}, \eqref{estimation_ystar_order2}, \eqref{estimation_ystar-1_order2},  \eqref{estimation_ystar-2_order2} as well as \eqref{12prime}--\eqref{phiy*y*x1},
\begin{gather}
-\int_0^L (\psi_x + \psi_{xxx})a dx -\frac{1}{2}
\int_0^L \psi_x \varphi_1^2  dx
  + \int_0^L \left(-c_1 \varphi_2 + q b\right)\psi dx  = 0,
\label{m-1^2}
\end{gather}
\begin{align}
-\int_0^L (\psi_x + \psi_{xxx})b dx &-
\int_0^L \psi_x \varphi_1\varphi_2 dx
\nonumber\\
&+ \int_0^L \left(c_1 \varphi_1 -\sqrt{3}c_1\varphi_2-2 qa +2 q c\right)\psi dx=0,
\label{m-1m-2}
\end{align}
\begin{gather}
\label{m-2^2}
-\int_0^L (\psi_x + \psi_{xxx})c dx-\frac{1}{2}
\int_0^L \psi_x \varphi_2^2  dx
+ \int_0^L \left(\sqrt{3} c_1 \varphi_1 - q b\right)\psi dx=0.
\end{gather}
Since \eqref{m-1^2}, \eqref{m-1m-2} and \eqref{m-2^2} must hold
for every $\psi\in C^3([0,L])$ satisfying \eqref{condition-bord-psi}, one gets that $a$, $b$ and $c$ are of class
$C^\infty$ on $[0,L]$ and satisfy

\begin{equation}
\left\{
\begin{array}
[c]{l}
a'+a'''+\varphi_1\varphi_{1}'-c_1\varphi_2+qb=0,\\
a(0)=a(L)=0,~a'(L)=0,\label{a21}
\end{array}
\right.  
\end{equation}
\begin{equation}
\left\{
\begin{array}
[c]{l}
b'+b'''+\varphi_{1}\varphi_{2}'+\varphi_{1}'\varphi_{2}+c_1\varphi_1-\sqrt{3}c_1\varphi_2 -2qa+2qc=0,\\
b(0)=b(L)=0,~b'(L)=0,
\end{array}
\right.  \label{a22}
\end{equation}
\begin{equation}
\left\{
\begin{array}
[c]{l}
c'+c'''+\varphi_2\varphi_{2}'+\sqrt{3}c_1\varphi_1-qb=0,\\
c(0)=c(L)=0,~c'(L)=0.
\end{array}
\right.  \label{a23}
\end{equation}

 We derive in the Appendix  the unique functions $a:[0,L]\to\R$, $b:[0,L]\to\R$ and $c:[0,L]\to\R$ which are  solutions
to  \eqref{a21}, \eqref{a22} and \eqref{a23}.
 From \eqref{defF} and \eqref{estimation_ystar_order2}, we get that, as $\mathbf{m}\rightarrow \mathbf{0}\in \R^2$,
\begin{align}
F(\mathbf{m})=
&\begin{pmatrix}
-qm_{2}+\sqrt{3}
c_{1}m_{2}^{2}+c_{1}m_{1}m_{2}+A_{1}m_{1}^{3}+B_{1}m_{1}^{2}m_{2}+C_{1}m_{1}m_{2}^{2}+D_{1}m_{2}^{3}\\
qm_{1}-c_{1}m_{1}^{2}-\sqrt{3}
c_{1}m_{1}m_{2}+A_{2}m_{1}^{3}+B_{2}m_{1}^{2}m_{2}+C_{2}m_{1}m_{2}^{2}+D_{2}m_{2}^{3}
\end{pmatrix}\nonumber\\&
+o(|\mathbf{m}|^3),
\label{ODE}
\end{align}
with
\begin{gather}A_1:=\int_0^La\varphi_1\varphi_1'dx,
\label{defA1}
\\
B_1:=\int_0^Lb\varphi_1\varphi_1'dx+\int_0^La\varphi_2\varphi_1'dx,
\label{defB1}
\\
C_1:=\int_0^Lc\varphi_1\varphi_1'dx+\int_0^Lb\varphi_2\varphi_1'dx,
\label{defC1}
\\
D_1:=\int_0^Lc\varphi_2\varphi_1'dx,
\label{defD1}
\\
A_2:=\int_0^La\varphi_1\varphi_2'dx,
\label{defA2}
\\
B_2:=\int_0^Lb\varphi_1\varphi_2'dx+\int_0^La\varphi_2\varphi_2'dx,
\label{defB2}
\\
C_2:=\int_0^Lc\varphi_1\varphi_2'dx+\int_0^Lb\varphi_2\varphi_2'dx,
\label{defC2}
\\
D_2:=\int_0^Lc\varphi_2\varphi_2'dx.
\label{defD2}
\end{gather}

Let us now study the local asymptotic stability property of $\mathbf{0}\in \R^2$ for \eqref{dotmathbfm}. We propose two methods for that. The first one is a more direct one, which relies on normal forms for dynamical systems on $\R^2$. The second one, which
relies on a Lyapunov approach related to the physics of \eqref{original}, is less direct. However, there is a reasonable hope that this second method can be applied
to other critical lengths $ L\in \mathcal{N} \setminus 2\pi\mathbb{N}$ for which the dimension of $M$ is larger than $2$.

\textbf{Method 1: normal form.} Let
\begin{equation}
z:=m_{1}+im_{2}\in \mathbb{C}.
\label{defz}
\end{equation}
Then
\begin{equation}
m_{1}=\frac{z+\overline{z}}{2},m_{2}=\frac{z-\overline{z}}{2i},
\end{equation}
and it follows from \eqref{dotmathbfm} and (\ref{ODE}) that, as $|z|\rightarrow 0$,
\begin{equation}
\dot{z}=\left( iq\right) z+P_{2}(z,\overline{z})+P_{3}(z,
\overline{z})+o(|z|^3),  \label{complex form}
\end{equation}
where $P_{j}(z,\overline{z})$ are polynomials in $z,\overline{z}$ of degree $
j.$ To be more precise, we have
\begin{eqnarray}
P_{2}(z,\overline{z}) &:=&\left( \sqrt{3}c_{1}m_{2}^{2}+c_{1}m_{1}m_{2}
\right) +i\left( -c_{1}m_{1}^{2}-\sqrt{3}c_{1}m_{1}m_{2}\right)  \notag \\
&=&-\frac{c_{1}}{2}\left( \sqrt{3}+i\right) z^{2}+\frac{c_{1}}{2}\left(
\sqrt{3}-i\right) z\overline{z},  \label{F2}
\end{eqnarray}
and
\begin{align}
\displaystyle P_{3}(z,\overline{z}):=\left( A_{1}+iA_{2}\right) \left( \frac{z+\overline{z}
}{2}\right) ^{3}+\left( B_{1}+iB_{2}\right) \left( \frac{z+\overline{z}}{2}
\right) ^{2}\left( \frac{z-\overline{z}}{2i}\right) \nonumber\\
\displaystyle
\text{ \ \ \ \ \ \ \ \ \ }+\left( C_{1}+iC_{2}\right) \left( \frac{z+
\overline{z}}{2}\right) \left( \frac{z-\overline{z}}{2i}\right) ^{2}+\left(
D_{1}+iD_{2}\right) \left( \frac{z-\overline{z}}{2i}\right) ^{3}.
\label{F3}
\end{align}
We can rewrite (\ref{complex form}) as
\begin{equation}
\dot{z}=\left( iq\right) z+\sum\limits_{i+j=2}^{3}\frac{1}{i!j!}
g_{ij}z^{i}\overset{-}{z}^{j}+o(|z|^{3}),  \label{standard form}
\end{equation}
and it is known from \cite[page 45 and page 47]{1981-Hassard-et-al-book} that (\ref{standard form}
) has the following Poincar\'{e} normal form
\begin{equation}
\dot{\xi }=\left( iq\right) \xi +\rho \xi ^{2}\overline{\xi }
+o(|\xi |^{3}),  \label{normal form}
\end{equation}
where
\begin{equation}
\rho =\frac{i}{2q}\left(g_{20}g_{11}-2\left\vert g_{11}\right\vert ^{2}-\frac{1}{3
}\left\vert g_{02}\right\vert ^{2}\right)+\frac{g_{21}}{2}.  \label{pho}
\end{equation}
According to (\ref{F2}) and (\ref{F3}), through a simple computation, we
have
\begin{gather}
g_{20}=-c_{1}\left( \sqrt{3}+i\right) ,g_{11}=\frac{c_{1}}{2}\left( \sqrt{3}
-i\right) ,g_{02}=0,  \label{part2}
\\
g_{21}=\frac{1}{4}\left( 3A_{1}+i3A_{2}
-i B_{1}+ B_{2} + C_{1}+iC_{2} +
-i3D_{1}+3D_{2}\right) .
\label{part3}
\end{gather}
Using (\ref{part2}) and (\ref{part3}), the formula of $\rho $ provided by (\ref{pho}) gives
\begin{eqnarray}
\rho =\rho_1+i \rho_2,
\end{eqnarray}
with
\begin{gather}
\label{defa}
\rho_1:=\frac{1}{8}\left(3A_{1}+C_{1}
+B_{2}+3 D_{2}\right), \\
 \rho_2:=
-2\frac{c_{1}^{2}}{q}+\frac{1}{8}\left(-B_{1} -3 D_{1} +3 A_{2}+C_{2}\right) .
\label{defb}
\end{gather}
It follows that we can derive  the Poincar\'{e} normal form of the
reduced equation on the local center manifold (\ref{normal form}). Moreover, in Cartesian coordinates, (\ref{normal form})  is
\begin{eqnarray}
\dot{\xi}_1 &=&-q\xi_2+(a\xi_1-b\xi_2)\left( \xi_1^{2}+\xi_2^{2}\right) +o(|\xi_1|^3+|\xi_2|^3), \\
\dot{\xi}_2 &=&q\xi_1+(a\xi_2+b\xi_1)\left( \xi_1^{2}+\xi_2^{2}\right) +o(|\xi_1|^3+|\xi_2|^3),
\end{eqnarray}
where
\begin{equation}
\xi =\xi_1+i \xi_2.
\end{equation}
In polar coordinates, set
\begin{equation}
r=\sqrt{\xi_1^{2}+\xi_2^{2}},\theta =\arctan \frac{\xi_2}{\xi_1},
\end{equation}
we have, as $r\rightarrow 0$,
\begin{eqnarray} \label{polar}
\dot{r} =\rho_1 r^{3}+o(r^{3}),\;
\dot{\theta } =q+\rho_2 r^{2}+o(r^{2}).
\end{eqnarray}
Now it is clear to see from (\ref{polar}) that the origin $\mathbf{0}\in \R^2$ is
asymptotically stable for \eqref{dotmathbfm} if $\rho_1<0$ and is not stable if $\rho_1>0$. From
\eqref{phi1sol}, \eqref{phi2sol}, \eqref{C}, \eqref{defA1}--\eqref{defD2} and the Appendix, we can obtain all the coefficients $A_{i},B_{i},C_{i},D_{i}$ $(i=1,2) $. Then, using Matlab, it follows that
\begin{equation}
 \rho_1:=\frac{1}{8}\left(A_{1}+C_{1}
+B_{2} +3 D_{2}\right) =-0.014325<0.
\end{equation}
And straightforward computation leads to the existence of $C>0$ such that, at least if $r(0)\in [0,+\infty)$ is small enough, 
one has for the solution to \eqref{polar},
\begin{equation}\label{estimate-r(t)} 
r(t)\leq \frac{C r(0)}{\sqrt{1+tr(0)^2}}, \quad \forall t\in [0,+\infty),
\end{equation}
which concludes the proof of Theorem~\ref{main result}.

\textbf{Method 2: Lyapunov function.} Let us start with a formal motivation. Recall  that, by \eqref{globallstabb} and  with $E$ defined in \eqref{defE},
we have, along the trajectories of \eqref{original},
\begin{gather}
\dot E=-\frac{1}{2}K^2
\label{dotEK},
\end{gather}
with
\begin{gather}
K:= y_x(0).
\label{defK}
\end{gather}
It is therefore natural to consider the following candidate for a Lyapunov function
\begin{equation}\label{defV}
V:=E-\mu K \dot K,
\end{equation}
where $\mu >0$ is small enough. Indeed, one then gets 
\begin{equation}\label{dotVmu}
\dot V:=-\frac{1}{2}K^2-\mu \left(\dot K\right)^2 -\mu K \ddot K,
\end{equation}
and one may hope to absorb $-\mu K \ddot K$ with $-(1/2)K^2-\mu \left(\dot K\right)^2$ and get 
$\dot V<0$ on $G\setminus\{0\}$, at least in a neighborhood of $0$. 

We follow this strategy together with the approximation of $g$ previously found. For $\mathbf{m}=(m_1,m_2)\in \Omega$, let (see \eqref{estimation_ystar_order2})
\begin{gather}
\label{deftildey}
\tilde y =m_1\varphi_1+m_2\varphi_2+m_1^2 a+m_1m_2 b+ m_2^2 c \in C^\infty([0,L]),
\\
\label{tildeE}
\tilde E :=\frac{1}{2}\int_0^L  \tilde y ^2 dx.
\end{gather}
 Then, using \eqref{eqn_varphi1}, \eqref{eqn_varphi2}, \eqref{a21}, \eqref{a22} and \eqref{a23} (compare with \eqref{eq-psi-int-tau=0}), one gets that, along the trajectories of \eqref{dotmathbfm}, for $\mathbf{m}\in \Omega$ and $\psi \in C^3([0,L])$ satisfying
 \begin{equation}
\psi (0) = \psi (L)= 0,
\label{condition-bord-psi-sans-prime}
\end{equation}
one has
\begin{align}
&-\int_0^L (\psi' + \psi''')\tilde y dx\nonumber\\ 
&+ \psi'(0)\left(m_1^2 a'(0)+m_1m_2 b'(0)+ m_2^2 c'(0)\right)
\nonumber\\
&-\frac{1}{2}
\int_0^L \psi_x \tilde y ^2  dx
  + \int_0^L \left(\dot m_1\varphi_1 + \dot m_2\varphi_2+
 \frac{\partial \tilde g}{\partial m_1} \dot m_1
+ \frac{\partial \tilde g}{\partial m_2}\dot m_2\right)\psi dx
\nonumber\\ 
&=\int_0^L (\tilde y_{t}+ \tilde y_x+\tilde y_{xxx}+\tilde y\tilde y_{x})\psi
dx\nonumber\\
&=\int_0^L  \bigg[m_1^3(A_1\varphi_1+A_2\varphi_2-bc_1+\varphi_1a'+a\varphi_1')\nonumber\\
&~~~~~~~~~~~+m_1^2m_2(B_1\varphi_1+B_2\varphi_2+2ac_1-b\sqrt{3}c_1\nonumber\\
&~~~~~~~~~~~~~~~~~~~~~~-2cc_1+\varphi_1b'+\varphi_2a'+a\varphi_2'+b\varphi_1')\nonumber\\
&~~~~~~~~~~~+m_1m_2^2(C_1\varphi_1+C_2\varphi_2+2a\sqrt{3}
c_{1}+bc_1\nonumber\\
&~~~~~~~~~~~~~~~~~~~~~~-2c\sqrt{3}
c_{1}+\varphi_1c'+\varphi_2b'+b\varphi_2'+c\varphi_1')\nonumber\\
&~~~~~~~~~~~+m_2^3(D_1\varphi_1+D_2\varphi_2+b\sqrt{3}
c_{1}+\varphi_2c'+c\varphi_2')\nonumber\\
&~~~~~~~~~~~+o(|\mathbf{m}|^3)\bigg]\psi
dx  \text{ as } |\mathbf{m}|\rightarrow 0.
 \label{formule-integrale-tilde}
\end{align}
Then, using \eqref{formule-integrale-tilde} with $\psi:=\tilde y
$ (which, by \eqref{eqn_varphi1}, \eqref{eqn_varphi2}, \eqref{a21}, \eqref{a22}, \eqref{a23} and  \eqref{deftildey}, satisfies \eqref{condition-bord-psi-sans-prime}), along the trajectories of \eqref{dotmathbfm}, we have
from \eqref{innerproduct_phi}, \eqref{phi-i-norm-1}, 
\eqref{abcperp}  and \eqref{ODE}--\eqref{defD2} that the right hand side of \eqref{formule-integrale-tilde} is $o(|\mathbf{m}|^4)$, and
\begin{align}
\label{exptildeE}
\dot{\tilde E}=&  -\frac{1}{2}\tilde K^2 + o(|\mathbf{m}|^4) \text{ as } |\mathbf{m}| \rightarrow 0,
\end{align}
with $\tilde K:\Omega \rightarrow \R$ defined by
\begin{equation}\label{deftildeK}
\tilde K: =a'(0) m_1^2 +b'(0) m_1m_2 + c'(0) m_2^2.
\end{equation}
Let us emphasize that, even if  ``along the trajectories of \eqref{dotmathbfm}'' might be misleading, $\dot{\tilde E}$ is just a function of $\mathbf{m}\in \Omega$. It is the same for $\dot{\tilde V}$, $\dot{\tilde K}$, $\ddot{\tilde K}$ which appear below. Using \eqref{g} and \eqref{defF}, we have, along the trajectories of \eqref{dotmathbfm},
\begin{equation}\label{dottildeKK}
\dot{\tilde K}=qb'(0)m_1^2+2q(c'(0)-a'(0))m_1m_2-qb'(0)m_2^2+o(|\mathbf{m}|^2).
\end{equation}
Using \eqref{defF}, we get the existence of $C>0$ such that, along the trajectories of \eqref{dotmathbfm},
\begin{equation}\label{ddottildeK}
\left|\ddot {\tilde K}\right|\leqslant C |\mathbf{m}|^2,\quad \forall \mathbf{m} \in \Omega.
\end{equation}

We can now define our Lyapunov function $\tilde V$. Let $\mu\in (0,1/4]$. Let $\tilde V :\Omega \to \R$ be defined by
\begin{equation}
\label{deftildeV}
\tilde V:= \tilde E-\mu \tilde K \dot {\tilde K}.
\end{equation}
 From \eqref{deftildeV}, we have the existence of $\eta_0>0$ such that, for every $\mathbf{m}\in \R^2$  satisfying $|\mathbf{m}|<\eta_0$ and along the trajectories of \eqref{dotmathbfm},
\begin{eqnarray}
\label{eqdottildeV}
\dot{\tilde V}&=& -\frac{1}{2}\tilde K^2-\mu \left(\dot {\tilde K}\right)^2-\mu \tilde K \ddot {\tilde K}+ o(|\mathbf{m}|^4)
\nonumber\\
&\leq& -\frac{1}{4}\tilde K^2 -\mu \left(\dot {\tilde K}\right)^2+\mu^2\left(\ddot {\tilde K}\right)^2+ o(|\mathbf{m}|^4)\nonumber\\
&\leq& -\frac{1}{4}\tilde K^2 -\mu \left(\dot {\tilde K}\right)^2+ 2\mu^2C ^2|\mathbf{m}|^4 
\nonumber\\
&\leq& -\mu\left(\tilde K^2 + \left(\dot {\tilde K}\right)^2- 2\mu C ^2|\mathbf{m}|^4\right).
\end{eqnarray}
 Let us assume for the moment that, for every $\mathbf{m}=(m_1,m_2)\in \R^2$,
\begin{equation}\label{not-degenerate}
\left(
\left\{
\begin{array}{l}
a'(0) m_1^2 +b'(0) m_1m_2 + c'(0) m_2^2=0,
\\
qb'(0)m_1^2+2q(c'(0)-a'(0))m_1m_2-qb'(0)m_2^2=0,
\end{array}
\right.
\right)
\Rightarrow
\left(
\mathbf{m}=\mathbf{0}
\right).
\end{equation}
 Then, by homogeneity, there exists $\eta_1>0$ such that
\begin{align}\label{lowersomme}
\left(a'(0) m_1^2 +b'(0) m_1m_2 + c'(0) m_2^2\right)^2&+\left(qb'(0)m_1^2+2q(c'(0)-a'(0))m_1m_2-qb'(0)m_2^2\right)^2
\nonumber\\
&\geq
2 \eta_1 |\mathbf{m}|^4, \, \forall \mathbf{m}=(m_1,m_2)\in \R^2.
\end{align}
 From \eqref{deftildeK}, \eqref{dottildeKK} and \eqref{lowersomme},  we get the existence of $\eta_2>0$ satisfying
\begin{equation}\label{lowersomme-K}
\tilde K^2 +\left(\dot {\tilde K}\right)^2\geq \eta_1 |\mathbf{m}|^4, \quad \forall \mathbf{m} \in \R^2
\text{ such that } |\mathbf{m}|<\eta_2.
\end{equation}
 From \eqref{eqdottildeV} and \eqref{lowersomme-K}, we get the existence of $\eta_3>0$ such that, for every
 $\mu\in (0,\eta_3)$,
\begin{equation}\label{estimate-dottildeV}
\dot{\tilde V}\leq - \frac{\mu}{2}\eta_1 |\mathbf{m}|^4 , \quad \forall \mathbf{m} \in \R^2
\text{ such that } |\mathbf{m}|<\eta_3.
\end{equation}
Moreover, straightforward estimates show that there exists $\eta_4>0$ such that, for every
 $\mu\in (0,\eta_4)$,
\begin{equation}\label{estimate-V}
\eta_4 |\mathbf{m}|^2 \leq \tilde V \leq \frac{1}{\eta_4} |\mathbf{m}|^2, \quad \forall \mathbf{m} \in \R^2
\text{ such that } |\mathbf{m}|<\eta_4,
\end{equation}
which, together with \eqref{estimate-dottildeV}, proves the existence of $C>0$ such that, at least if $\mathbf{m}^0\in \R^2$ is small enough, the solution to \eqref{dotmathbfm} satisfies
\begin{equation}\label{estimate-m(t)}
|\mathbf{m}(t)|\leq \frac{C |\mathbf{m}^0|}{\sqrt{1+t|\mathbf{m}^0|^2}}, \quad \forall t\geq 0.
\end{equation}

It only remains to prove \eqref{not-degenerate}. From the Appendix, 
one gets that $c'(0)\approx0.0118 \neq0$, then \eqref{not-degenerate}
holds if $m_1=0$. Let us now deal with the case $m_1\neq 0$. Dividing
both the polynomials on the two equations on the left hand side of \eqref{not-degenerate}  by $m_1^{2}$, then the
two resulting polynomials have a common  root if and only if  their resultant is zero. This resultant
is the determinant of the Sylvester matrix $S$:
\begin{align}
S:= \begin{pmatrix}
c'(0) & b'(0)& a'(0)&0\\
0& c'(0)& b'(0)& a'(0)\\
-b'(0) & -2(a'(0)-c'(0))& b'(0)&0\\
0& -b'(0)& -2(a'(0)-c'(0))&b'(0)
\end{pmatrix}.
\end{align}
Straightforward computations show that
\begin{align}
{\text{det}}(S)=&a'(0)^3[b'(0)+4c'(0)]+a'(0)^2[-2b'(0)^2+b'(0)c'(0)-8c'(0)^2]\nonumber\\
&+a'(0)[5b'(0)^2c'(0)+4c'(0)^3]-b'(0)^2c'(0)^2-b'(0)^4.
\label{eq-detS}
\end{align}
From \eqref{eq-detS} and the Appendix
 (see in particular \eqref{a21ex}, \eqref{a23ex} and \eqref{a22ex}), we have
\begin{align}
{\text{det}}(S)&\approx-0.0197\neq0.
\end{align}
Hence, the
two resulting polynomials do not have a common  root. Thus, \eqref{not-degenerate} is proved.

\begin{rem}
It follows from our proof of Theorem~\ref{main result} that the decay rate stated in \eqref{estimatey(t)infinity} is optimal in the following sense: there exists $\varepsilon>0$ such that, for every $y_0\in G$ such that $\Vert y_0\Vert_{L^2(0,L)}\leq \varepsilon$,
\begin{equation}\label{lower-bound}
\Vert y(t,\cdot)\Vert _{L^2(0,L)}\geq \frac{\varepsilon \Vert y_0\Vert _{L^2(0,L)}}{\sqrt{1+t\Vert y_0\Vert _{L^2(0,L)}^2}}.
\end{equation}
$\Big($For the  Lyapunov approach, let us point out that, decreasing if necessary $\eta_3>0$, one has,  for every
 $\mu\in (0,\eta_3)$,
\begin{equation}\label{lower-dottildeV}
\dot{\tilde V}\geq - \frac{1}{\eta_3} |\mathbf{m}|^4 , \quad \forall \mathbf{m} \in \R^2
\text{ such that } |\mathbf{m}|<\eta_3.\Big)
\end{equation}
\end{rem}


\section{Conclusion and future works}
\label{sec-conclusion}\

In this article, we have proved that for the
critical case of $L=2\pi\sqrt{7/3}$, $0\in L^2(0,L)$ is locally asymptotically stable for the KdV equation \eqref{original}.
First, we recalled that the equation has a two-dimensional local center manifold. Next, through a second order power series approximation at $0\in M$ of the function $g$ defining the local center manifold, we derived the local asymptotic stability of  $0\in L^2(0,L)$ on the local center manifold and obtained a polynomial decay rate for the solution to the KdV equation \eqref{original} on the center manifold.

Since  the KdV equation  \eqref{original}
also has other (periodic) steady
states than the origin (see Remark~\ref{rem_nonunique}), it remains an open and interesting problem to consider the (local) stability property of these steady states for the  KdV equation \eqref{original}. Furthermore, it remains to consider all the other critical cases with a two-dimensional (local) center manifold as well as
all the last remaining critical cases, i.e., when the equation has a  (local) center manifold with a dimension larger than 2.

\Appendix

\section{On the solution $a$, $b$ and $c$
to equations \eqref{a21}, \eqref{a22} and \eqref{a23}}

Set
\begin{align}
f_+(x):=a(x)+c(x),~f_-(x):=a(x)-c(x),\label{f+-}
\end{align}
and
\begin{equation}
\left\{
\begin{array}
[c]{l}
g_+(x):=\varphi_1(x)\varphi_{1}'(x)+\varphi_2(x)\varphi_{2}'(x)+\sqrt{3}c_1\varphi_1(x)-c_1\varphi_2(x),\\
g_-(x):=\varphi_1(x)\varphi_{1}'(x)-\varphi_2(x)\varphi_{2}'(x)-\sqrt{3}c_1\varphi_1(x)-c_1\varphi_2(x),\\
g(x):=\varphi_{1}(x)\varphi_{2}'(x)+\varphi_{1}'(x)\varphi_{2}(x)+c_1\varphi_1(x)-\sqrt{3}c_1\varphi_2(x).
\end{array}
\right.\label{g+-0}
\end{equation}
First, adding each equation of \eqref{a23} to the corresponding
equation
of \eqref{a21}, we have
the following ODE equation for $f_+(x)$:\begin{equation}
\left\{
\begin{array}
[c]{l}
f_+'''(x)+f_+'(x)+g_+(x)=0,\\
f_+(0)=f_+(L)=0,~f_+'(L)=0.
\end{array}
\right.  \label{a24}
\end{equation}
Second, subtracting each equation of \eqref{a23} from the corresponding equation
of \eqref{a21}, we obtain
\begin{equation}
\left\{
\begin{array}
[c]{l}
2qb(x)+f_-'(x)+f_-'''(x)+g_-(x)=0,\\
f_-(0)=f_-(L)=0,~f_-'(L)=0,
\end{array}
\right.  \label{a25}
\end{equation}
which gives
\begin{align}
b(x)=-\frac{1}{2q}(f_-'(x)+f_-'''(x)+g_-(x)).\label{a22f}
\end{align}
Substitute \eqref{a22f} into \eqref{a22}, then
the following ODE equation for $f_-(x)$ is obtained:
\begin{equation}
\left\{
\begin{array}
[c]{l}
 f_-^{(6)}(x)+2f_-^{(4)}(x)+f_-''(x)+4q^2f_-(x)+g_-'(x)+g_-'''(x)-2qg(x)=0,
\\
f_-(0)=f_-(L)=~f_-'(L)=f_-'''(L)=0,\\
 f_-'(0)+f_-'''(0)=0,~f_-''(L)+f_-^{(4)}(L)=0,
\end{array}
\right.  \label{a26}
\end{equation}
where the second and third lines are borrowed from \eqref{a25}, and the last
three lines are obtained from
\eqref{a22f} and the boundary conditions of \eqref{eqn_varphi1}, \eqref{eqn_varphi2},
\eqref{a22}, \eqref{g+-0}, and
\eqref{a25}.

By employing the method of undetermined coefficients, the (unique) solution
to the  the nonhomogeneous ODE equation \eqref{a24}
is
\begin{align}
f_{+}(x) &=\sum\limits_{l=1}^{3} C_{+l}f_{+l}(x)\!+\!c_{+11}\!\cos\left(\frac{1}{\sqrt{21}}x\!\right)\!\!+c_{+12}\sin\left(\frac{1}{\sqrt{21}}x\right)\!+\!c_{+21}\cos\left(\frac{3}{\sqrt{21}}x\right)\nonumber\\
&~~~+c_{+31}\cos\left(\frac{4}{\sqrt{21}}x\right)+c_{+32}\sin\left(\frac{4}{\sqrt{21}}x\right)+c_{+41}\cos\left(\frac{5}{\sqrt{21}}x\right)\nonumber\\
&~~~+c_{+42}\sin\left(\frac{5}{\sqrt{21}}x\right)+c_{+51}\cos\left(\frac{6}{\sqrt{21}}x\right)+c_{+61}\cos\left(\frac{9}{\sqrt{21}}x\right),
\label{gen_sol_10}
\end{align}
where the fundamental solutions $f_{+l}(x), ~l=1, 2, 3$ are
\begin{align}
f_{+1}(x)=1, ~f_{+2}(x)=\cos(x), ~f_{+3}(x)=\sin(x),\label{fplus123}
\end{align}
and the constants are 
\begin{align}
&c_{+11}=\frac{3c_1\Theta}{(\frac{1}{\sqrt{21}})-(\frac{1}{\sqrt{21}})^3},
~~c_{+12}=\frac{-3\sqrt{3}c_1\Theta}{-(\frac{1}{\sqrt{21}})+(\frac{1}{\sqrt{21}})^3},
~d_{21}=\frac{\Theta^2\frac{18}{\sqrt{21}}}{(\frac{1}{\sqrt{21}})-(\frac{1}{\sqrt{21}})^3},\\
&c_{+31}=\frac{-2c_1\Theta}{(\frac{1}{\sqrt{21}})-(\frac{1}{\sqrt{21}})^3},~d_{32}=\frac{2\sqrt{3}c_1\Theta}{-(\frac{1}{\sqrt{21}})+(\frac{1}{\sqrt{21}})^3},~d_{41}=\frac{c_1\Theta}{(\frac{1}{\sqrt{21}})-(\frac{1}{\sqrt{21}})^3},\\
&c_{+42}=\frac{\sqrt{3}c_1\Theta}{-(\frac{1}{\sqrt{21}})+(\frac{1}{\sqrt{21}})^3},~d_{51}=\frac{\Theta^2\frac{18}{\sqrt{21}}}{(\frac{1}{\sqrt{21}})-(\frac{1}{\sqrt{21}})^3},~d_{61}=\frac{\Theta^2\frac{-18}{\sqrt{21}}}{(\frac{1}{\sqrt{21}})-(\frac{1}{\sqrt{21}})^3},
\end{align}
and
\begin{align}
C_{+l}=\frac{{\text{det}}(A_{+l})}{{\text{det}}(A_{+})},~l=1, 2,  3.
\end{align}
Here,
\begin{align}
&A_{+}=\begin{pmatrix}
f_{+1}(0) & f_{+2}(0) & f_{+3}(0)\\
f_{+1}(L) & f_{+2}(L) & f_{+3}(L)\\
f_{+1}'(L) & f_{+2}'(L) & f_{+3}'(L)
\end{pmatrix},
\end{align}
and each $ A_{+l}$  is the matrix formed by replacing the $l$-th column of $A_+$ with a column vector $-b_{+}$, where
\begin{align}
b_{+}=\begin{pmatrix}
b_{+1} & b_{+2} & b_{+3}
\end{pmatrix}\tr ,
\end{align}
and
\begin{align}
&b_{+1}=c_{+11}+c_{+21}+c_{+31}+c_{+41}+c_{+51}+c_{+61},\\
&b_{+2}=c_{+11}\cos\left(\frac{1}{\sqrt{21}}L\right)+c_{+12}\sin\left(\frac{1}{\sqrt{21}}L\right)
+c_{+21}\cos\left(\frac{3}{\sqrt{21}}L\right)
\nonumber\\
&~~~~~~~+c_{+31}\cos\left(\frac{4}{\sqrt{21}}L\right)
+c_{+32}\sin\left(\frac{4}{\sqrt{21}}L\right)
+c_{+41}\cos\left(\frac{5}{\sqrt{21}}L\right)\nonumber\\
&~~~~~~~+c_{+42}\sin\left(\frac{5}{\sqrt{21}}L\right)
+c_{+51}\cos\left(\frac{6}{\sqrt{21}}L\right)
+c_{+61}\cos\left(\frac{9}{\sqrt{21}}L\right),\\
&b_{+3}\!=\!-\frac{1}{\sqrt{21}}c_{+11}\!\sin\left(\!\frac{1}{\sqrt{21}}L\!\right)\!\!
+\!\!\frac{1}{\sqrt{21}}c_{+12}\!\cos\left(\!\frac{1}{\sqrt{21}}L\!\right)\!\!
-\!\!\frac{3}{\sqrt{21}}c_{+21}\!\sin\left(\frac{3}{\sqrt{21}}L\!\right)\nonumber\\
&~~~~~~~~\!-\!\frac{4}{\sqrt{21}}c_{+31}\!\sin\left(\!\frac{4}{\sqrt{21}}L\!\right)\!\!
+\!\frac{4}{\sqrt{21}}c_{+32}\!\cos\left(\!\frac{4}{\sqrt{21}}L\!\right)\!\!
-\!\!\frac{5}{\sqrt{21}}c_{+41}\!\sin\left(\frac{5}{\sqrt{21}}L\!\right)\nonumber\\
&~~~~~~~~\!+\!\frac{5}{\sqrt{21}}c_{+42}\!\cos\left(\!\frac{5}{\sqrt{21}}L\!\right)\!\!
-\!\!\frac{6}{\sqrt{21}}c_{+51}\!\sin\left(\!\frac{6}{\sqrt{21}}L\!\right)\!\!
-\!\!\frac{9}{\sqrt{21}}c_{+61}\!\sin\left(\frac{9}{\sqrt{21}}L\!\right).
\end{align}

Similarly, by employing the method of undetermined coefficients, the (unique) solution to the nonhomogeneous ODE system \eqref{a26} is
\begin{align}
f_{-}(x) =&\sum\limits_{l=1}^{6}C_{-l}f_{-l}(x)+c_{-11}\cos\left(\frac{1}{\sqrt{21}}x\right)+c_{-12}\sin\left(\frac{1}{\sqrt{21}}x\right)+c_{-21}\cos\left(\frac{2}{\sqrt{21}}x\right)\nonumber\\
&+c_{-31}\cos\left(\frac{4}{\sqrt{21}}x\right)+c_{-32}\sin\left(\frac{4}{\sqrt{21}}x\right)+c_{-41}\cos\left(\frac{5}{\sqrt{21}}x\right)\nonumber\\
&+c_{-42}\sin\left(\frac{5}{\sqrt{21}}x\right)+c_{-51}\cos\left(\frac{8}{\sqrt{21}}x\right)+c_{-61}\cos\left(\frac{10}{\sqrt{21}}x\right),
\label{gen_sol_1}
\end{align}
where the fundamental solutions
$f_{-l}, l=\overline{1,6}$ are \begin{equation}
\left\{
\begin{array}
[c]{l}
f_{-1}(x)=e^{\alpha _{1}x}\cos \left(
\beta _{1}x\right)
,~~~f_{-2}(x)=e^{\alpha _{1}x}\sin \left( \beta _{1}x\right),\\
f_{-3}(x)=e^{-\alpha
_{1}x}\cos \left( \beta _{1}x\right),~f_{-4}(x)=e^{-\alpha _{1}x}\sin \left(
\beta _{1}x\right),\\
f_{-5}(x)=\cos \left( \beta _{2}x\right),~~~~~~~~~f_{-6}(x)=\sin \left( \beta _{2}x\right),\label{fmin56}
\end{array}
\right.
\end{equation}
with
\begin{eqnarray}
\alpha _{1} &=&\frac{\left(
20+\sqrt{57}\right) ^{\frac{1}{3}}-7\left(
20+\sqrt{57}\right) ^{-\frac{1}{3}} }{2\sqrt{7}}, \\
\beta _{1} &=&\frac{\left(
20+\sqrt{57}\right) ^{\frac{1}{3}}+7\left(
20+\sqrt{57}\right) ^{-\frac{1}{3}} }{2\sqrt{21}}, \\
\beta _{2} &=&
\frac{\left(
20+\sqrt{57}\right) ^{\frac{1}{3}}+7\left(
20+\sqrt{57}\right) ^{-\frac{1}{3}} }{\sqrt{21}},
\end{eqnarray}
the constants are
\begin{align}
&c_{-11}=\frac{-3\Theta^2\frac{40}{21^2}
+4q\Theta^2\frac{2}{\sqrt{21}}+9qc_1\Theta}{(\frac{1}{\sqrt{21}})^6-2(\frac{1}{\sqrt{21}})^4+(\frac{1}{\sqrt{21}})^2-4q^2},\\
&c_{-12}=\frac{-9\sqrt{3}qc_1\Theta}{(\frac{1}{\sqrt{21}})^6-2(\frac{1}{\sqrt{21}})^4+(\frac{1}{\sqrt{21}})^2-4q^2},\\
&c_{-21}=\frac{3\Theta^2\frac{18}{21^2}-4q\Theta^2\frac{3^2}{\sqrt{21}}}{(\frac{2}{\sqrt{21}})^6-
2(\frac{2}{\sqrt{21}})^4+(\frac{2}{\sqrt{21}})^2-4q^2},\\
&c_{-31}=\frac{3\Theta^2\frac{240}{21^2}-4q\Theta^2\frac{12}{\sqrt{21}}
-6qc_1\Theta}{(\frac{4}{\sqrt{21}})^6-2(\frac{4}{\sqrt{21}})^4+(\frac{4}{\sqrt{21}})^2-4q^2},\\
&c_{-32}=\frac{6\sqrt{3}qc_1\Theta}{(\frac{4}{\sqrt{21}})^6-2(\frac{4}{\sqrt{21}})^4+(\frac{4}{\sqrt{21}})^2-4q^2},
\\
&c_{-41}=\frac{-3\Theta^2\frac{600}{21^2}+4q\Theta^2\frac{30}{\sqrt{21}}
-3qc_1\Theta}{(\frac{5}{\sqrt{21}})^6-2(\frac{5}{\sqrt{21}})^4+(\frac{5}{\sqrt{21}})^2-4q^2},\\
&c_{-42}=\frac{-3\sqrt{3}qc_1\Theta}{(\frac{5}{\sqrt{21}})^6-2(\frac{5}{\sqrt{21}})^4
+(\frac{5}{\sqrt{21}})^2-4q^2},\\
&c_{-51}=\frac{3\Theta^2\frac{2048}{21^2}-4q\Theta^2\frac{16}{\sqrt{21}}}{(\frac{8}{\sqrt{21}})^6-2(\frac{8}{\sqrt{21}})^4
+(\frac{8}{\sqrt{21}})^2-4q^2},\\
&c_{-61}=\frac{3\Theta^2\frac{1250}{21^2}+4q\Theta^2\frac{5}{\sqrt{21}}}{(\frac{10}{\sqrt{21}})^6
-2(\frac{10}{\sqrt{21}})^4+(\frac{10}{\sqrt{21}})^2-4q^2},
\end{align}
and
\begin{align}
C_{-l}=\frac{{\text{det}}(A_{-l})}{{\text{det}}(A_{-})},~l=\overline{1,6}.
\end{align}
Here, the matrix
\begin{align}
&A_{-}=\begin{pmatrix}
\alpha_{-1} & \alpha_{-2} & \alpha_{-3} & \alpha_{-4} & \alpha_{-5} & \alpha_{-6}
\end{pmatrix},
\end{align}
where
\begin{align}
\alpha_{-l}=\begin{pmatrix}
 f_{-l}(0)\\
 f_{-l}(L)\\
 f_{-l}'(L)\\
 f_{-l}'(0)+f_{-l}'''(0)\\
 f_{-l}'''(L)\\
 f_{-l}''(L)+f_{-l}^{(4)}(L)
\end{pmatrix}, ~l=\overline{1,6},
\end{align}
and each $ A_{-l}$  is the matrix formed by replacing the $l$-th column of
$A_-$ with a column vector $-b_{-}$, where
\begin{align}
b_{-}=\begin{pmatrix}
b_{-1} & b_{-2} & b_{-3} & b_{-4} & b_{-5} & b_{-6}
\end{pmatrix}\tr .
\end{align}
Here,
\begin{align}
&b_{-1}=c_{-11}+c_{-21}+c_{-31}+c_{-41}+c_{-51}+c_{-61},
\end{align}
\begin{eqnarray}
b_{-2}&=&c_{-11}\cos\left(\frac{1}{\sqrt{21}}L\right)
+c_{-12}\sin\left(\frac{1}{\sqrt{21}}L\right)
+c_{-21}\cos\left(\frac{2}{\sqrt{21}}L\right)
\nonumber\\
&&+c_{-31}\cos\left(\frac{4}{\sqrt{21}}L\right)+c_{-32}\sin\left(\frac{4}{\sqrt{21}}L\right)
+c_{-41}\cos\left(\frac{5}{\sqrt{21}}L\right)\nonumber\\
&&+c_{-42}\sin\left(\frac{5}{\sqrt{21}}L\right)+c_{-51}\cos\left(\frac{8}{\sqrt{21}}L\right)
+c_{-61}\cos\left(\frac{10}{\sqrt{21}}L\right),
\end{eqnarray}
\begin{eqnarray}
b_{-3}&=&-\frac{1}{\sqrt{21}}c_{-11}\sin\left(\frac{1}{\sqrt{21}}L\right)
+\frac{1}{\sqrt{21}}c_{-12}\cos\left(\frac{1}{\sqrt{21}}L\right)\nonumber\\
&&-\frac{2}{\sqrt{21}}c_{-21}\sin\left(\frac{2}{\sqrt{21}}L\right)\nonumber\\
&&-\frac{4}{\sqrt{21}}c_{-31}\sin\left(\frac{4}{\sqrt{21}}L\right)
+\frac{4}{\sqrt{21}}c_{-32}\cos\left(\frac{4}{\sqrt{21}}L\right)\nonumber\\
&&-\frac{5}{\sqrt{21}}c_{-41}\sin\left(\frac{5}{\sqrt{21}}L\right)
+\frac{5}{\sqrt{21}}c_{-42}\cos\left(\frac{5}{\sqrt{21}}L\right)\nonumber\\
&&-\frac{8}{\sqrt{21}}c_{-51}\sin\left(\frac{8}{\sqrt{21}}L\right)
-\frac{10}{\sqrt{21}}c_{-61}\sin\left(\frac{10}{\sqrt{21}}L\right),
\end{eqnarray}
\begin{eqnarray}
b_{-4}&=&\frac{20}{21\sqrt{21}}c_{-12}\cos\left(\frac{1}{\sqrt{21}}L\right)+\frac{20}{21\sqrt{21}}c_{-32}\cos\left(\frac{4}{\sqrt{21}}L\right)\nonumber\\
&&-\frac{20}{\sqrt{21}}c_{-42}\cos\left(\frac{5}{\sqrt{21}}L\right),
\end{eqnarray}
\begin{eqnarray}
b_{-5}&=&-\frac{20}{21\sqrt{21}}c_{-11}\sin\left(\frac{1}{\sqrt{21}}L\right)
+\frac{20}{21\sqrt{21}}c_{-12}\cos\left(\frac{1}{\sqrt{21}}L\right)\nonumber\\
&&-\frac{34}{21\sqrt{21}}c_{-21}\sin\left(\frac{2}{\sqrt{21}}L\right)\nonumber\\
&&-\frac{20}{21\sqrt{21}}c_{-31}\sin\left(\frac{4}{\sqrt{21}}L\right)
+\frac{20}{21\sqrt{21}}c_{-32}\cos\left(\frac{4}{\sqrt{21}}L\right)\nonumber\\
&&+\frac{20}{21\sqrt{21}}c_{-41}\sin\left(\frac{5}{\sqrt{21}}L\right)
-\frac{20}{21\sqrt{21}}c_{-42}\cos\left(\frac{5}{\sqrt{21}}L\right)\nonumber\\
&&+\frac{344}{21\sqrt{21}}c_{-51}\sin\left(\frac{8}{\sqrt{21}}L\right)
+\frac{790}{21\sqrt{21}}c_{-61}\sin\left(\frac{10}{\sqrt{21}}L\right),
\end{eqnarray}
\begin{eqnarray}
b_{-6}&=&-\frac{20}{21^2}c_{-11}\cos\left(\frac{1}{\sqrt{21}}L\right)
-\frac{20}{21^2}c_{-12}\sin\left(\frac{1}{\sqrt{21}}L\right)
\nonumber\\
&&-\frac{68}{21^2}c_{-21}\cos\left(\frac{2}{\sqrt{21}}L\right)
\nonumber\\
&&-\frac{80}{21^2}c_{-31}\cos\left(\frac{4}{\sqrt{21}}L\right)
-\frac{80}{21^2}c_{-32}\sin\left(\frac{4}{\sqrt{21}}L\right)\nonumber\\
&&+\frac{100}{21^2}c_{-41}\cos\left(\frac{5}{\sqrt{21}}L\right)
+\frac{100}{21^2}c_{-42}\sin\left(\frac{5}{\sqrt{21}}L\right)\nonumber\\
&&+\frac{2752}{21^2}c_{-51}\cos\left(\frac{8}{\sqrt{21}}L\right)
+\frac{7900}{21^2}c_{-61}\cos\left(\frac{10}{\sqrt{21}}L\right).
\end{eqnarray}

Therefore, we derive from \eqref{f+-} that
\begin{eqnarray}
a(x)&=&\displaystyle
\frac{1}{2}(f_+(x)+f_-(x))\nonumber\\
&=&\displaystyle \frac{1}{2}\bigg[ \sum\limits_{l=1}^{3} C_{+l}f_{+l}(x)+\sum\limits_{l=1}^{6}C_{-l}f_{-l}(x)\nonumber\\
&&\displaystyle +(c_{+11}+c_{-11})\cos\left(\frac{1}{\sqrt{21}}x\right)+(c_{+12}+c_{-12})\sin\left(\frac{1}{\sqrt{21}}x\right)\nonumber\\
&&\displaystyle +c_{-21}\cos\left(\frac{2}{\sqrt{21}}x\right)+c_{+21}\cos\left(\frac{3}{\sqrt{21}}x\right)\nonumber\\
&&\displaystyle +(c_{+31}+c_{-31})\cos\left(\frac{4}{\sqrt{21}}x\right)+(c_{+32}+c_{-32})\sin\left(\frac{4}{\sqrt{21}}x\right)\nonumber\\
&&\displaystyle +(c_{+41}+c_{-41})\cos\left(\frac{5}{\sqrt{21}}x\right)+(c_{+42}+c_{-42})\sin\left(\frac{5}{\sqrt{21}}x\right)\nonumber\\
&&\displaystyle +c_{+51}\cos\left(\frac{6}{\sqrt{21}}x\right)+c_{-51}\cos\left(\frac{8}{\sqrt{21}}x\right)\nonumber\\
&&\displaystyle +c_{+61}\cos\left(\frac{9}{\sqrt{21}}x\right)+c_{-61}\cos\left(\frac{10}{\sqrt{21}}x\right)\bigg],\label{a21ex}
\end{eqnarray}
and
\begin{eqnarray}
c(x)&=& \displaystyle \frac{1}{2}(f_+(x)-f_-(x))\nonumber\\
&=&\displaystyle \frac{1}{2}\bigg[ \sum\limits_{l=1}^{3} C_{+l}f_{+l}(x)-\sum\limits_{l=1}^{6}C_{-l}f_{-l}(x)\nonumber\\
&&\displaystyle +(c_{+11}-c_{-11})\cos\left(\frac{1}{\sqrt{21}}x\right)+(c_{+12}-c_{-12})\sin\left(\frac{1}{\sqrt{21}}x\right)\nonumber\\
&&\displaystyle -c_{-21}\cos\left(\frac{2}{\sqrt{21}}x\right)+c_{+21}\cos\left(\frac{3}{\sqrt{21}}x\right)\nonumber\\
&&\displaystyle +(c_{+31}-c_{-31})\cos\left(\frac{4}{\sqrt{21}}x\right)+(c_{+32}-c_{-32})\sin\left(\frac{4}{\sqrt{21}}x\right)\nonumber\\
&&\displaystyle +(c_{+41}-c_{-41})\cos\left(\frac{5}{\sqrt{21}}x\right)+(c_{+42}-c_{-42})\sin\left(\frac{5}{\sqrt{21}}x\right)\nonumber\\
&&\displaystyle +c_{+51}\cos\left(\frac{6}{\sqrt{21}}x\right)-c_{-51}\cos\left(\frac{8}{\sqrt{21}}x\right)\nonumber\\
&&\displaystyle +c_{+61}\cos\left(\frac{9}{\sqrt{21}}x\right)-c_{-61}\cos\left(\frac{10}{\sqrt{21}}x\right)\bigg].\label{a23ex}
\end{eqnarray}
From \eqref{a22f}, we obtain
\begin{eqnarray}
b(x)&=&-\frac{1}{2q}(f_-'(x)+f_-'''(x)+g_-(x))\nonumber\\
&=&-\frac{1}{2q}\bigg[ \sum\limits_{l=1}^{6}C_{-l}f_{-l}'(x)+\sum_{l=1}^{6}C_{-l}f_{-l}'''(x)\nonumber\\
&&-\left(\frac{20}{21\sqrt{21}}c_{-11}
+\frac{2}{\sqrt{21}}\Theta^2+3c_1\Theta\right)\sin\left(\frac{1}{\sqrt{21}}x\right)\nonumber\\
&&+\left(\frac{20}{21\sqrt{21}}c_{-12}
+3\sqrt{3}c_1\Theta\right)\cos\left(\frac{1}{\sqrt{21}}x\right)\nonumber\\
&&-\left(\frac{34}{21\sqrt{21}}c_{-21}+\frac{9}{\sqrt{21}}\Theta^2\right)\sin\left(\frac{2}{\sqrt{21}}x\right)\nonumber\\
&&-\left(\frac{20}{21\sqrt{21}}c_{-31}+2c_1\Theta\right)\sin\left(\frac{4}{\sqrt{21}}x\right)\nonumber\\
&&+\left(\frac{20}{21\sqrt{21}}c_{-32}-2\sqrt{3}c_1\Theta\right)\cos\left(\frac{4}{\sqrt{21}}x\right)\nonumber\\
&&+\left(\frac{20}{21\sqrt{21}}c_{-41}+\frac{30}{\sqrt{21}}\Theta^2+c_1\Theta\right)\sin\left(\frac{5}{\sqrt{21}}x\right)\nonumber\\
&&-\left(\frac{20}{21\sqrt{21}}c_{-42}+\sqrt{3}c_1\Theta\right)\cos\left(\frac{5}{\sqrt{21}}x\right)
\nonumber\\
&&-\frac{12}{\sqrt{21}}\Theta^2\sin\left(\frac{6}{\sqrt{21}}x\right)
+\left(\frac{8\times43}{21\sqrt{21}}c_{-51}
-\frac{16}{\sqrt{21}}\Theta^2\right)\sin\left(\frac{8}{\sqrt{21}}x\right)\nonumber\\
&&+\left(\frac{790}{\sqrt{21}}c_{-61}-\frac{5}{\sqrt{21}}\Theta^2\right)\sin\left(\frac{10}{\sqrt{21}}x\right)\bigg].\label{a22ex}
\end{eqnarray}

\textbf{Acknowledgment.} We would like to thank Shengquan Xiang for useful
comments on a preliminary version of this article.

\bibliographystyle{plain}
\bibliography{TSXKdVnonlinear}

\end{document}